\newtheorem{theorem}{Theorem}[section]
\newtheorem* {theorem*}{Theorem}
\newtheorem{lemma}[theorem]{Lemma}
\newtheorem{corollary}[theorem]{Corollary}
\newtheorem{claim}[theorem]{Claim}  
\newtheorem{conjecture}[theorem]{Conjecture}
\theoremstyle{definition}
\newtheorem{definition}[theorem]{Definition}
\newtheorem{notation}[theorem]{Notation}
\theoremstyle{remark}
\newcommand{\D}{\mathcal{D}}
\newcommand{\C}{\mathcal{C}}
\newcommand{\M}{\mathcal{M}}
\newcommand{\cS}{P}
\newcommand{\cT}{Q}
\newcommand{\cs}{\alpha}
\newcommand{\ct}{\beta}
\newcommand{\refT}[1]{Theorem~\ref{#1}}
\newcommand{\refCl}[1]{Claim~\ref{#1}}
\newcommand{\refL}[1]{Lemma~\ref{#1}}
\newcommand{\refCon}[1]{Conjecture~\ref{#1}}
\let\OLDthebibliography\thebibliography
\renewcommand\thebibliography[1]{
  \OLDthebibliography{#1}
  \setlength{\parskip}{0pt}
  \setlength{\itemsep}{0pt plus 0.3ex}
}
\title{The list chromatic number of the intersection of two generalized partition matroids}
\author{He Guo}\thanks{Faculty of Mathematics, Technion, Haifa 32000, Israel. Email: {\tt hguo@campus.technion.ac.il}}
\date{May 2024}
\begin{document}

\begin{abstract}
A famous theorem of Galvin states that the list chromatic number of the intersection of two partition matroids equals its chromatic number. Kir\'aly and B\'{e}rczi et. al. conjectured that this equality holds for any two matroids. We prove this conjecture and a conjecture by Aharoni--Berger for any two generalized partition matroids. 
\end{abstract}

\maketitle

\section{Introduction}

An (\emph{abstract}) \emph{complex}~$\C$ is a finite set of finite sets which is closed under taking subsets, i.e., if $S\in\C$, then $T\in\C$ for every~$T\subseteq S$. 
 The set~$V=\bigcup \C$ is called the \emph{ground set} of ~$\C$.
Each~$S\in\C$ is called a \emph{face} of~$\C$. Given~$U\subseteq V$, $\C[U]:=\{S\subseteq U\mid S\in \C\}$ is called the \emph{subcomplex of~$\C$ induced on~$U$}.

A complex~$\M$ is a \emph{matroid} if~$\emptyset\in\M$, and for any $S, T \in \M$ with $|S|<|T|$, there exists $v\in T\setminus S$ such that $S\cup\{v\}\in\M$. A set $S\in\M$ is also called \emph{independent} in~$\M$ (a term taken from the matroid of linearly independent sets in $\mathbb{F}_p^n$, it should not be confused with independent sets of vertices in a graph). For $A\subseteq V$, the set \emph{spanned} by~$A$ in~$\M$ is
\[ span_\M(A):=A\cup\{x\in V\setminus A: \{x\}\cup I\not\in \M \text{ for some $I\in\M[A]$} \}. \]

The \emph{expansion number} of~$\M$ is
\[ \Delta(\M):=\max_{\emptyset\neq S\subseteq V}\frac{|span_\M(S)|}{|S|}.  \]

In a \emph{generalized partition matroid}~$\M$ the ground set is partitioned  into sets~$P_1,\dots,P_a$ and positive integers $p_1,\dots,p_a$ are given, such that $I\in \M$ if and only if $|I\cap P_i|\le p_i$ for each $1\le i\le a$. We call~$p_1,\dots, p_a$ the \emph{constraints} of the parts~$P_1,\dots, P_a$, respectively. 
If  $p_1=\cdots=p_a=1$, the matroid is plainly called a \emph{partition matroid}.

For a complex~$\C$, a \emph{$\C$-respecting coloring} is a set of faces in~$\C$ whose union is~$V(\C)$.
    The \emph{chromatic number} $\chi(\C)$ of~$\C$ is the minimum number of faces in a~$\C$-respecting coloring. For example, when~$\C=\mathcal{I}(G)$,  the collection of independent sets in a graph~$G$,~$\chi(\C)$ is the (classic) chromatic number of~$G$. When~$\C=\mathcal{M}(G)$, the collection of all matchings in the graph~$G$,~$\chi(\C)$ is the edge chromatic number of~$G$.

Extending a theorem of Nash-Williams~\cite{williams} for graph arboricity, Edmonds~\cite{edmonds1965minimum} %(see also~\cite{edmonds1965transversals}) 
proved the following result.
\begin{theorem}\label{thm:williams}
    For a matroid~$\M$, 
    \[ \chi(\M)= \lceil \Delta(\M)\rceil. \]
\end{theorem}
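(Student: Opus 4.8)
The plan is to recognise Theorem~\ref{thm:williams} as the classical Nash-Williams--Edmonds covering theorem, restated through the expansion number, and to prove the two bounds $\chi(\M)\ge\lceil\Delta(\M)\rceil$ and $\chi(\M)\le\lceil\Delta(\M)\rceil$ separately. A standing preliminary: since $V=\bigcup\M$, no element of $V$ is a loop (so $\chi(\M)\le|V|<\infty$, every singleton being a face), hence $r_\M(A)=0$ only when $A=\emptyset$; and a routine check against the circuit description of dependence shows that $span_\M(S)$ coincides with the closure of $S$ in $\M$, whence $r_\M(span_\M(S))=r_\M(S)\le|S|$.

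\emph{Lower bound.} Let $k=\chi(\M)$ and let $I_1,\dots,I_k\in\M$ be faces with $\bigcup_j I_j=V$. Fix a nonempty $S\subseteq V$ and write $F=span_\M(S)$. As $\M$ is closed under subsets, each $I_j\cap F$ is an independent subset of $F$, hence $|I_j\cap F|\le r_\M(F)=r_\M(S)\le|S|$; therefore $|span_\M(S)|=\bigl|\bigcup_j(I_j\cap F)\bigr|\le k|S|$. Maximising over $S$ yields $\Delta(\M)\le k$, and as $\chi(\M)$ is an integer, $\chi(\M)\ge\lceil\Delta(\M)\rceil$.

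\emph{Upper bound.} Set $k=\lceil\Delta(\M)\rceil$, so $\Delta(\M)\le k$. The first step is a dictionary lemma: $\Delta(\M)\le k$ is equivalent to $|A|\le k\cdot r_\M(A)$ for every $A\subseteq V$. Indeed, for a nonempty $A$ with basis $S$ one has $|span_\M(S)|/|S|=|span_\M(A)|/r_\M(A)\ge|A|/r_\M(A)$, while for any nonempty $S$ one has $|span_\M(S)|/|S|\le|span_\M(S)|/r_\M(S)$, which is of the form $|A|/r_\M(A)$; hence $\Delta(\M)=\max\{|A|/r_\M(A):\emptyset\ne A\subseteq V\}$, and the stated equivalence follows (the case $A=\emptyset$ reading $0\le0$). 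The second step applies the Matroid Union Theorem: $\M^{\vee k}:=\M\vee\cdots\vee\M$ ($k$ copies) is a matroid whose faces are exactly the unions of $k$ faces of $\M$, with rank function $r^{\vee k}(A)=\min_{B\subseteq A}\bigl(|A\setminus B|+k\cdot r_\M(B)\bigr)$. By the dictionary lemma, every $B\subseteq V$ gives $|V\setminus B|+k\cdot r_\M(B)\ge|V\setminus B|+|B|=|V|$, with equality at $B=\emptyset$, so $r^{\vee k}(V)=|V|$; thus $V$ is a union of $k$ faces of $\M$, i.e.\ an $\M$-respecting colouring with $k$ classes, and $\chi(\M)\le k=\lceil\Delta(\M)\rceil$. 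Together with the lower bound this gives $\chi(\M)=\lceil\Delta(\M)\rceil$.

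\emph{Main obstacle.} Everything above except the Matroid Union Theorem (and its min--max rank formula) is bookkeeping; that theorem is where the matroid exchange axiom genuinely does the work, so I would either cite it or isolate it as a lemma proved by the usual exchange-graph augmentation. In the latter case one keeps disjoint faces $I_1,\dots,I_k\in\M$ with uncovered set $R=V\setminus\bigcup_j I_j$, builds an auxiliary digraph on $V$ recording, for each block $j$, the single-element swaps leaving $I_j$ a face, and shows that either a shortest directed path out of $R$ permits enlarging $\bigcup_j I_j$, or else the vertex set reachable from $R$ is a set $B$ with $k\cdot r_\M(B)<|B|$, contradicting the dictionary form of $\Delta(\M)\le k$. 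The reduction of the colouring problem to this statement, and the translation into the language of $span_\M$ and $\Delta(\M)$, are then routine.
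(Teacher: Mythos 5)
The paper cites Theorem~\ref{thm:williams} to Edmonds~\cite{edmonds1965minimum} without supplying a proof, so there is no in-paper argument to compare against; I will instead assess your proof on its own terms. It is correct, and it follows the standard route through matroid union. The preliminary observation that $span_\M$ agrees with the usual matroid closure is right (add $x$ to a basis of $A$ to pass between the two descriptions), so $r_\M(span_\M(S))=r_\M(S)\le|S|$. The lower bound is then clean: restricting a colouring by $k$ faces to $F=span_\M(S)$ gives independent subsets of $F$, each of size at most $r_\M(F)=r_\M(S)\le|S|$, so $|F|\le k|S|$ and $\Delta(\M)\le\chi(\M)$. Your ``dictionary lemma'' correctly identifies $\Delta(\M)\le k$ with Edmonds' covering condition $|A|\le k\,r_\M(A)$ for all $A\subseteq V$ (passing to a basis of $A$ in one direction and to $span_\M(S)$ in the other), and applying the matroid-union rank formula at $A=V$ yields $r^{\vee k}(V)=|V|$, i.e.\ a cover by $k$ faces, giving $\chi(\M)\le\lceil\Delta(\M)\rceil$. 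You are right to flag that the Matroid Union Theorem carries essentially all the matroid-theoretic weight here; if a self-contained argument is wanted, the exchange-graph augmentation you sketch is the usual way to supply it and is essentially Edmonds' original matroid partition algorithm recast. In short: a correct, standard proof, of a result the paper deliberately treats as known.
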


K\"onig's edge coloring theorem says that if  $\M_1$ and $\M_2$ are partition matroids on the same ground set, then  
\begin{equation*}
%    \label{max}
 \chi(\M_1\cap\M_2) = \max(\chi(\M_1), \chi(\M_2))=\max(\Delta(\M_1),\Delta(\M_2)).
\end{equation*}

In~\cite{ABR}, Aharoni and Berger offered the following conjecture (see also~\cite{AB}).
\begin{conjecture}\label{conj:AB}
    For any two matroids  $\M_1$ and $\M_2$ on the same ground set,
    \[ \chi(\M_1\cap\M_2)\le\max\big(\chi(\M_1),\chi(\M_2)+1 \big).  \]
\end{conjecture}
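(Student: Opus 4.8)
I should say at the outset that \refCon{conj:AB} appears to be open in full generality: what follows is the line I would pursue, with an honest account of where it stalls.

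The plan is to induct on $|V|$, peeling off one carefully chosen common independent set. Applying the conjectured inequality to both orderings of the pair shows the statement is equivalent to: $\chi(\M_1\cap\M_2)\le\max(\chi(\M_1),\chi(\M_2))+1$ always, with the sharper bound $\chi(\M_1\cap\M_2)\le\max(\chi(\M_1),\chi(\M_2))$ whenever $\chi(\M_1)\ne\chi(\M_2)$. As is typical for such ``chromatic number of an intersection'' problems, the difficulty concentrates on the \emph{balanced case} $\chi(\M_1)=\chi(\M_2)=:k$ (the unbalanced cases are governed by a one-sided version of the lemma below, which I will not dwell on). Concretely, I want to remove a nonempty $I\in\M_1\cap\M_2$, pass to the restrictions $\M_i':=\M_i[V\setminus I]$, apply the induction hypothesis on $V\setminus I$ to cover $V\setminus I$ by $\le k$ of the common independent sets of $\M_1'\cap\M_2'$ --- which are exactly the common independent sets of $\M_1\cap\M_2$ lying inside $V\setminus I$ --- and then adjoin $I$ to obtain a cover of $V$ by $k+1$.

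For this to work the induction hypothesis must give $\le k$ on $V\setminus I$, and since the restriction of a matroid never raises its chromatic number, this holds precisely when $\chi(\M_1[V\setminus I])\le k-1$ \emph{or} $\chi(\M_2[V\setminus I])\le k-1$. So everything reduces to the following \emph{peeling lemma}: \emph{if $\chi(\M_1)=\chi(\M_2)=k\ge 2$, there is a nonempty common independent set $I\in\M_1\cap\M_2$ whose removal strictly lowers the chromatic number of at least one of the two matroids.} By \refT{thm:williams} this has a purely quantitative reformulation in terms of expansion numbers: one seeks a nonempty $I\in\M_1\cap\M_2$ and an index $i$ with $\bigl|\mathrm{span}_{\M_i}(S)\setminus I\bigr|\le(k-1)\,|S|$ for every nonempty $S\subseteq V\setminus I$, i.e.\ $I$ must be independent in both matroids while meeting every tight witness of $\Delta(\M_i)\ge k$. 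When $\M_1,\M_2$ are partition matroids this is the inductive core of K\"onig's edge coloring theorem --- delete a matching saturating the maximum-degree vertices --- and for generalized partition matroids the same idea can be pushed through with a capacitated-flow argument, which is in effect the route of the present paper. For arbitrary $\M_1,\M_2$ the two matroid structures can obstruct one another: a set that spans enough of $\M_i$ need not stay independent in the other matroid, and producing $I$ in general is exactly where the conjecture is currently out of reach. This is the step I expect to be the main obstacle.

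Two further attacks are worth developing in parallel. \emph{(a) Topological connectivity}: bound the homological connectivity of the common-independence complex $\M_1\cap\M_2$ by comparing it with $\Delta(\M_1)$ and $\Delta(\M_2)$ along Aharoni--Berger--Ziv lines, then attempt to iterate; the sticking point is that high connectivity yields one large common independent set (or one good rainbow set), not a partition into few, so turning ``one large'' into ``few covering'' needs a new idea. \emph{(b) Polyhedral rounding}: first establish the fractional identity $\chi_f(\M_1\cap\M_2)=\max(\Delta(\M_1),\Delta(\M_2))$ for the fractional chromatic number $\chi_f$ from linear-programming duality together with the weighted matroid-intersection theorem (this step looks routine), after which the conjecture becomes the assertion that the common-independent-set covering polytope has integrality gap at most $1$; the obstacle is the rounding, for which there is no visible analogue of the acyclic-orientation/kernel structure underlying Galvin's bipartite argument and its generalized-partition-matroid extension. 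In all three approaches the binding difficulty is identical: closing the gap between the easy bounds $\max(\chi(\M_1),\chi(\M_2))\le\chi(\M_1\cap\M_2)\le 2\max(\chi(\M_1),\chi(\M_2))$ down to the conjectured ``$+1$''.
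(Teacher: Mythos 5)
You are right that \refCon{conj:AB} is stated as a conjecture and remains open in full generality: the paper only settles it for generalized partition matroids, as a consequence of \refT{thm:chi2GP}, and so there is no ``paper proof'' for the general statement to compare against. Your reading of the situation is accurate, and your equivalent reformulation (the ``$+1$'' bound plus the sharper bound in the unbalanced case) is correct.

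Two things are worth flagging. First, the paper's route for the generalized partition case is \emph{not} the peeling induction you outline. In \refT{thm:chi2GP} one fixes $C=\max(\chi(\M_1),\chi(\M_2))$ pairwise disjoint common faces $N_1,\dots,N_C$ of maximum total size, supposes some $v$ is uncovered, locates indices $\cs,\ct$ where the relevant constraints are slack, and then invokes Hoffman's circulation theorem (\refT{thm:hoffman}) to reshuffle $N_\cs\cup N_\ct\cup\{v\}$ into two new common faces, contradicting maximality. This is an augmenting/exchange argument in the spirit of the alternating-chain proof of K\"onig, not the ``delete a matching covering the max-degree vertices'' proof you have in mind; and it delivers the stronger identity $\chi(\M_1\cap\M_2)=\max(\chi(\M_1),\chi(\M_2))$ for generalized partition matroids, with no ``$+1$'' at all. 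Your description of the paper's method as ``in effect'' the peeling route is therefore a little off: both are circulation-flavored, but the induction scheme is different.

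Second, the unbalanced case is less automatic than your parenthetical suggests. When $\chi(\M_1)=\chi(\M_2)+1=k$, the conjecture demands $\chi(\M_1\cap\M_2)\le k$, so after peeling a common independent $I$ you need the cover of $V\setminus I$ to use at most $k-1$ sets. Applying the induction hypothesis to $\M_1[V\setminus I],\M_2[V\setminus I]$ gives only $\max(\chi(\M_1[V\setminus I]),\,\chi(\M_2[V\setminus I])+1)$; since $\chi(\M_2[V\setminus I])$ can still equal $k-1$, this bound can still be $k$, leaving you with $k+1$ after adjoining $I$. So the ``one-sided version'' of the peeling lemma you gesture at would actually have to lower \emph{both} chromatic numbers, which is a genuinely stronger demand. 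This is a real gap on top of the balanced-case obstacle you already identify; it is worth separating from, and not understating relative to, the main difficulty of producing the peeling set $I$ in the balanced case.
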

In~\cite{AB}, using topology, it is proved that $\chi(\M_1\cap\M_2)\le2\max\big(\chi(\M_1),\chi(\M_2) \big)$. 
By a refined topological method, the author and Berger~\cite{Eli} proved that 
$\chi(\M_1\cap\M_2)\le\chi_\ell(\M_1\cap\M_2)\le \chi(\M_1)+\chi(\M_2)$ (see the definition of~$\chi_\ell$ below).

Given a complex~$\C$ and lists $(L_v:v\in V(\C))$ of {\em permissible colors}, a {\em  list coloring} with respect to these lists is a function $f:V\rightarrow \cup_{v\in V}L_v$ satisfying $f(v)\in L_v$ for every $v\in V$. It is said to be $\C$-{\em respecting} if $f^{-1}(c) \in \C$ for every color $c\in \cup_{v\in V}L_v$.
The \emph{list chromatic number}~$\chi_\ell(\C)$ is the minimal integer~$p$ such that any lists~$(L_v:v\in V)$ satisfying $|L_v| = p$ for each $v\in V$ has a $\C$-respecting list coloring. 

If $L_v=[p]$ for every $v\in V$ then a $\C$-respecting list coloring is just a $\C$-respecting coloring by~$p$ colors. Therefore \[\chi_\ell(\C)\ge \chi(\C).\] 

As is well-known,  $\chi_\ell(\C)/\chi(\C)$ can be arbitrarily large. For example, when~$\C$ is the collection of independent sets in the complete bipartite graph~$K_{n,n}$,  $\chi(\C)=2$ and $\chi_\ell(\C)=\Theta(\log n)$~\cite{erdos1979choosability}. But, as shown by Seymour ~\cite{SEYMOUR1998150}, 
in matroids $\chi_\ell=\chi$. 
In~\cite{ABGK}, answering a question by~Kir\'aly~\cite{kiralyk} and B\'{e}rczi, Schwarcz, and Yamaguchi~\cite{Berczi}, the author together with Aharoni, Berger, and Kotlar,  proved that when $\C$ is the intersection of~$k$ matroids on the same ground set, then 
\[ \chi_\ell(\C)\le k\chi(\C). \]

Settling a conjecture of Dinitz, Galvin~\cite{GALVIN1995153} proved a strengthening of  K\"onig's edge coloring theorem: 
\begin{theorem}\label{thm:galvin}
     For any two partition matroids~$\M_1$ and~$\M_2$ on the same ground set, 
    \[\chi_\ell(\M_1\cap\M_2)=\chi(\M_1\cap\M_2).\]
\end{theorem}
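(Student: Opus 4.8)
The plan is to reinterpret the statement as a list edge-colouring problem for a bipartite multigraph and then run Galvin's kernel argument. First I would reduce. Writing $\M_1$ with parts $P_1,\dots,P_a$ and $\M_2$ with parts $Q_1,\dots,Q_b$, all constraints equal to $1$, on a common ground set $V$, I form a bipartite multigraph $G$ with sides $A=\{u_1,\dots,u_a\}$ and $B=\{w_1,\dots,w_b\}$ in which each $v\in V$ is an edge joining $u_i$ and $w_j$, where $v\in P_i\cap Q_j$. Then $I\subseteq V$ lies in $\M_1\cap\M_2$ iff $|I\cap P_i|\le 1$ and $|I\cap Q_j|\le 1$ for all $i,j$, i.e. iff $I$ is a matching of $G$; so $\M_1\cap\M_2=\mathcal M(G)$, whence $\chi(\M_1\cap\M_2)=\chi'(G)$ (the edge-chromatic number) and an $\mathcal M(G)$-respecting list colouring is precisely a proper list edge-colouring of $G$. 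Since $\chi_\ell\ge\chi$ always, it suffices to show that $G$ is properly edge-colourable from any edge-lists of size $k:=\chi'(G)=\chi(\M_1\cap\M_2)$.

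Next I would build the Galvin orientation. By the definition of $\chi$ there is a proper edge-colouring $\phi\colon E(G)\to\{1,\dots,k\}$. Let $D$ be the digraph on vertex set $E(G)$, with underlying graph the line graph $L(G)$, obtained by drawing, for adjacent edges $e,f$, an arc $e\to f$ whenever [$e$ and $f$ meet at a vertex of $A$ and $\phi(e)<\phi(f)$] or [$e$ and $f$ meet at a vertex of $B$ and $\phi(e)>\phi(f)$]. (For a pair of parallel edges both clauses may fire, so $D$ may contain $2$-cycles; this is harmless below.) If $e=u_iw_j$ has $\phi(e)=c$, then the out-neighbours of $e$ are exactly the edges at $u_i$ of colour $>c$ (at most $k-c$ of them) together with the edges at $w_j$ of colour $<c$ (at most $c-1$ of them), and these two families are disjoint, so $d^+_D(e)\le k-1$.

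The heart of the proof is kernel-perfectness of $D$: for every $F\subseteq E(G)$, the induced subdigraph $D[F]$ should have a kernel, i.e. an independent set $K$ (a matching of $G$) meeting the out-neighbourhood of every vertex of $F\setminus K$. I would exhibit one via stable matchings: regard $F$ as a bipartite graph on $A\cup B$, order the $F$-edges at each vertex of $A$ by preferring a larger $\phi$-value and at each vertex of $B$ by preferring a smaller $\phi$-value, and let $M\subseteq F$ be a stable matching (Gale--Shapley). Then $M$ is a matching of $G$, hence independent in $L(G)$, and for $e=u_iw_j\in F\setminus M$ the fact that $e$ does not block $M$ forces either $u_i$'s $M$-edge $e'$ to have $\phi(e')>\phi(e)$---so $e\to e'$, as $e'$ is incident to $u_i$---or $w_j$'s $M$-edge $e''$ to have $\phi(e'')<\phi(e)$---so $e\to e''$. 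Hence $M$ is a kernel. To conclude I would invoke the Bondy--Boppana--Siegel lemma: if $D$ is a kernel-perfect digraph whose every vertex $v$ has a list of size at least $d^+_D(v)+1$, then its underlying graph admits a proper colouring from those lists (proof by induction: fix a colour $c$, properly colour a kernel of the subdigraph induced by the vertices whose list contains $c$, delete it and delete $c$ from all lists, and note that domination by the kernel preserves every list-versus-outdegree inequality). Applied to $L(G)$ with $k$-element lists, using $k\ge (k-1)+1\ge d^+_D(e)+1$, this yields exactly the required list edge-colouring of $G$, so $\chi_\ell(\M_1\cap\M_2)\le k=\chi(\M_1\cap\M_2)$.

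The main obstacle is the kernel-perfectness step, and inside it the idea of equipping $A$ and $B$ with \emph{opposite} preference orders: this is what makes a blocking pair of a stable matching always translate into an out-arc of $D$, i.e. it is where the two matroid constraints are forced to cooperate. A secondary point is the bookkeeping for parallel edges (the $2$-cycles of $D$)---one must check that the out-degree bound is still $k-1$ and that a parallel edge is still dominated by the stable matching---which is the only place the genuine multigraph generality, rather than just simple bipartite graphs, is actually needed.
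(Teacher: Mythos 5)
Your proof is correct: it is Galvin's original kernel argument, run after noting that the intersection of two partition matroids is the matching complex of a bipartite multigraph. The paper does not re-prove \refT{thm:galvin}---it cites Galvin's paper---but its \refT{thm:chiellchiGP} recovers it as the all-ones special case, and that proof is a matroidal analogue of yours. Where you pass to the line graph, build Galvin's orientation from a proper edge colouring, obtain kernels from Gale--Shapley stable matchings, and finish with the Bondy--Boppana--Siegel list-colouring lemma, the paper works directly inside the complex $\M_1\cap\M_2$: the colour classes $N_1,\dots,N_C$ are turned into two opposite linear orders $<_1,<_2$ on the ground set (this is exactly your ``prefer large $\phi$ at $A$, prefer small $\phi$ at $B$'' trick), Fleiner's matroid kernel theorem (\refT{thm:MSM}) replaces Gale--Shapley, and the out-degree bookkeeping of Bondy--Boppana--Siegel is replaced by the two-threshold invariant $1\le t_\D(v)\le T_\D(v)$ that \refCl{claim:kernel} carries through the induction---the extra parameter is what lets a constraint $p_i>1$ absorb several kernel elements at once, which has no analogue in the line-graph picture. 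When $p_i\equiv q_j\equiv 1$, the set $\Gamma_{\D,1}(v)\cup\Gamma_{\D,2}(v)$ is exactly the neighbourhood of $v$ in your Galvin orientation (with arcs reversed), Fleiner's kernel is an ordinary stable matching, and the two proofs coincide. Your route is more elementary and more widely known; the paper's is the one that survives the generalization to arbitrary constraints without any detour through line graphs.
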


Kir\'aly~\cite{kiraly2013open} and B\'{e}rczi, Schwarcz, and Yamaguchi~\cite{Berczi} conjectured that this is in fact true for the intersection of any pair of matroids. 

\begin{conjecture}\label{conj:berczi}
For any two matroids~$\M_1$ and $\M_2$ on the same ground set,
\[ \chi_\ell(\M_1\cap\M_2)=\chi(\M_1\cap\M_2). \]
\end{conjecture}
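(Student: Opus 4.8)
Write $k:=\chi(\M_1\cap\M_2)$ and fix lists $(L_v:v\in V)$ with $|L_v|=k$; we must produce an $(\M_1\cap\M_2)$-respecting list coloring. Since a set lies in a matroid exactly when it contains none of its circuits, requiring that every color class lie in $\M_1\cap\M_2$ is the same as requiring that no circuit of $\M_1$ and no circuit of $\M_2$ be monochromatic. Thus the problem is the weak list-coloring problem for the hypergraph $H$ on vertex set $V$ whose hyperedges are the circuits of $\M_1$ together with the circuits of $\M_2$; by construction the weak chromatic number of $H$ is $k$, and we want its weak list chromatic number to be $k$ as well.

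The plan is to imitate Galvin's proof of \refT{thm:galvin}, whose two pillars are: (i) the \emph{Kernel Lemma} — a (hyper)graph admits an $L$-list coloring whenever it carries an orientation $D$ in which every induced subdigraph has a kernel and $|L_v|>d^+_D(v)$ for all $v$; and (ii) a construction, from an optimal proper coloring of the relevant bipartite graph, of such an orientation with all out-degrees $\le k-1$. For (i) one would first have to extract the hypergraph form of the Kernel Lemma. For (ii) one would start from an optimal partition $V=A_1\cup\cdots\cup A_k$ into common independent sets and orient a conflicting pair $v,w$ of elements according to the comparison of the indices $j$ of their classes $A_j$, using opposite conventions for a conflict witnessed by a circuit of $\M_1$ versus one of $\M_2$ — the left/right asymmetry Galvin exploits for the two sides of a bipartite graph, which there makes the out-degree of a color-$c$ edge split as $(c-1)+(k-c)=k-1$.

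\textbf{Where this succeeds, and the main obstacle.} When $\M_1,\M_2$ are partition matroids all circuits have size two, $H$ is the line graph of a bipartite graph, and Galvin's argument applies verbatim (\refT{thm:galvin}). When $\M_1,\M_2$ are generalized partition matroids one can instead \emph{reduce} to this case: given an optimal coloring $A_1,\dots,A_k$, split each part $P_i$ of $\M_1$ into $p_i$ blocks so that each $A_j\cap P_i$ meets every block in at most one element — possible because the sets $A_j\cap P_i$ partition $P_i$ into pieces of size $\le p_i$, so one is merely properly $p_i$-coloring a disjoint union of cliques of size $\le p_i$. Carrying this out simultaneously for the parts of $\M_1$ and of $\M_2$ turns them into partition matroids $\M_1',\M_2'$ for which $A_1,\dots,A_k$ is still a valid coloring, so $\chi(\M_1'\cap\M_2')=k$; and since every face of $\M_1'\cap\M_2'$ is a face of $\M_1\cap\M_2$, Galvin's theorem gives $\chi_\ell(\M_1\cap\M_2)\le\chi_\ell(\M_1'\cap\M_2')=k$, whence equality. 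This reduction is the content of the present paper. The obstacle to \refCon{conj:berczi} in general is precisely that for arbitrary matroids there is no bipartite skeleton to split onto: circuits are large and heavily overlapping, a single circuit can force many out-neighbors so even the out-degree bound in (ii) is no longer clear, and ``some circuit is monochromatic'' is not the vanishing locus of a polynomial once circuits have size $\ge 3$, so the algebraic route is closed off as well. Producing a kernel-perfect, bounded-out-degree orientation — equivalently, a suitably stable assignment of colors to elements respecting both matroids — for two general matroids is the genuinely new ingredient that the conjecture still lacks.
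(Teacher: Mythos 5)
Note first that the statement in question is a conjecture that the paper does not prove in full generality; the paper establishes it only for generalized partition matroids (Theorem~\ref{thm:main}), and your proposal correctly identifies the general case as open. Your argument for that special case, however, is \emph{not} the content of the paper as you assert near the end. The paper proves Theorem~\ref{thm:chiellchiGP} directly on the generalized partition matroids: it equips $\M_1$ and $\M_2$ with linear orders derived from an optimal coloring $N_1,\dots,N_C$, taken in opposite directions on the two matroids (the matroidal analogue of Galvin's left/right asymmetry), then invokes Fleiner's matroid kernel theorem (Theorem~\ref{thm:MSM}) in place of the ordinary bipartite kernel lemma, and runs a double induction (Claim~\ref{claim:kernel}) on a pair of integer potentials $t_\D(v)\le T_\D(v)$ that replace the scalar out-degree bound in Galvin's proof. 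Your route---splitting each part $P_i$ into $p_i$ blocks compatibly with the (assumed disjoint) color classes $A_1,\dots,A_k$ so as to obtain partition matroids $\M_1',\M_2'$ with $\M_1'\cap\M_2'\subseteq\M_1\cap\M_2$ and $\chi(\M_1'\cap\M_2')\le k$, then citing Galvin and the monotonicity of $\chi_\ell$ under passing to a subcomplex---is a genuinely different and noticeably shorter argument that bypasses Fleiner's theorem entirely, and as far as I can see it is correct, provided you state explicitly that the $A_j$ may be taken pairwise disjoint (shrink the faces), so that the conflict graph on each $P_i$ really is a disjoint union of cliques of size $\le p_i$. The trade-off is that your reduction yields only $\chi_\ell=\chi$ and not the exact value $\chi(\M_1\cap\M_2)=\max(\chi(\M_1),\chi(\M_2))$, which the paper derives separately via Hoffman's circulation theorem (Theorem~\ref{thm:chi2GP}) and which forms part of Theorem~\ref{thm:main}. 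Your diagnosis of why these techniques stall for arbitrary matroids (large overlapping circuits, no bipartite skeleton to orient, no low-degree polynomial encoding) is accurate and consistent with the conjecture's status.
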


Kir\'aly and Pap~\cite{kiraly2010list} proved the conjecture when $\M_1 $ and $\M_2$ are both transversal matroids or both are of rank 2; or the ground set is the disjoint union of two arborescences having the same root, $\M_1$ is the graphic matroid and $\M_2$ is the partition matroid with the parts formed by the in-stars.
We prove~\refCon{conj:AB} and~\refCon{conj:berczi} when~$\M_1$ and~$\M_2$ are two generalized partition matroids. 

\begin{theorem}\label{thm:main}
     For any two generalized partition matroids  $\M_1$ and $\M_2$ on the same ground set,  
    \[  \chi_\ell(\M_1\cap\M_2)=\chi(\M_1\cap\M_2)=\max\big(\chi(\M_1),\chi(\M_2)\big).  \]
\end{theorem}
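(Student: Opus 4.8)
The plan is to reduce the list-coloring statement to the ordinary chromatic statement via a discharging / greedy argument, exploiting the product structure of generalized partition matroids. First I would establish the equality $\chi(\M_1\cap\M_2)=\max(\chi(\M_1),\chi(\M_2))$, which should follow from \refT{thm:williams} together with a direct analysis of the expansion number of $\M_1\cap\M_2$: if $\M_i$ has parts $\{P^i_j\}$ with constraints $\{p^i_j\}$, then a set $S$ is spanned in $\M_1\cap\M_2$ exactly when it is spanned in $\M_1$ or in $\M_2$, so $|span_{\M_1\cap\M_2}(S)|=\max(|span_{\M_1}(S)|,|span_{\M_2}(S)|)$ — actually one wants the union, but a short argument (or an appeal to the fact that the relevant maximizing $S$ can be taken to be a single part) shows $\Delta(\M_1\cap\M_2)=\max(\Delta(\M_1),\Delta(\M_2))$. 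Write $k:=\lceil\max(\Delta(\M_1),\Delta(\M_2))\rceil=\max(\chi(\M_1),\chi(\M_2))$; this is the target number of colors.

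The heart of the matter is the list-coloring upper bound $\chi_\ell(\M_1\cap\M_2)\le k$. Here I would model $\M_1\cap\M_2$ as (essentially) a bipartite-graph edge-coloring problem with multiplicities, since each ground element $v$ lies in exactly one part $P^1_{j_1}$ of $\M_1$ and one part $P^2_{j_2}$ of $\M_2$; thinking of the parts of $\M_1$ as "left vertices" and parts of $\M_2$ as "right vertices," a common independent set of $\M_1\cap\M_2$ is a sub(multi)graph in which each left vertex $P^1_j$ has degree $\le p^1_j$ and each right vertex $P^2_j$ has degree $\le p^2_j$ — i.e. a subgraph of bounded degree, where the bound on vertex $P^i_j$ is $p^i_j$. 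A $\C$-respecting list coloring is then exactly a proper-in-the-multigraph-sense assignment: each color class must be "degree-bounded." The condition $k\ge\Delta(\M_i)$ translates (after checking the extremal sets are single parts) into: for every part, $\lceil |P^i_j|/p^i_j\rceil \le k$, i.e. $|P^i_j|\le k\,p^i_j$. So we must list-color the edges of a bipartite multigraph $G$ from lists of size $k$ so that every color class has degree $\le p^1_j$ at each left vertex and $\le p^2_j$ at each right vertex, given that $\deg(u)\le k\,p(u)$ at every vertex $u$.

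To prove this I would run the Galvin / kernel-method machinery (Bondy–Boppana–Siegel, Alon–Tarsi orientation, or the Maffray stable-set-in-line-graph argument) in the degree-bounded generalization. The clean route: replace each left vertex $P^1_j$ by $p^1_j$ copies and distribute its incident edges among the copies so each copy has degree $\le k$ (possible since $\deg\le k p^1_j$), and likewise on the right; after this "splitting" we obtain an ordinary bipartite multigraph of maximum degree $\le k$, and the original degree-bounded list-coloring constraint is implied by ordinary proper edge list-coloring of the split graph — EXCEPT that a split color class, when unsplit, has degree $\le$ (number of copies of $u$ that received color $c$) $\le p(u)$ only if at most $p(u)$ copies use color $c$; since there are exactly $p(u)$ copies this is automatic. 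Hence it suffices that the split bipartite multigraph (max degree $\le k$) has list chromatic index $\le k$, which is exactly Galvin's theorem \refT{thm:galvin} (the line graph of a bipartite multigraph with max degree $k$ is the intersection of two partition matroids, and its chromatic number is $k$). Combining with $\chi_\ell\ge\chi$ and the first paragraph gives all three equalities. The main obstacle I anticipate is the bookkeeping in the splitting step — one must verify that the edges can be distributed among the copies so that \emph{both} endpoints' degree bounds are respected simultaneously; this is an Eulerian-orientation / balanced-splitting argument (split a vertex of too-high degree by pairing up edges along an Euler tour) and needs to be carried out so that it composes consistently across the bipartition, which is the only genuinely non-routine part of the plan.
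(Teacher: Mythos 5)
Your approach is correct, and it is genuinely different from the paper's. The paper proceeds in two independent steps: it proves $\chi(\M_1\cap\M_2)=\max(\chi(\M_1),\chi(\M_2))$ by modeling $\M_1\cap\M_2$ as the simple $b$-matchings of a bipartite multigraph and invoking Hoffman's circulation theorem, and then proves $\chi_\ell=\chi$ by running Galvin's kernel argument with Fleiner's matroid generalization of the Gale--Shapley stable matching theorem in place of the bipartite-graph kernel lemma. You instead reduce the whole problem to Galvin's theorem itself by splitting each part $P^i_j$ into $p^i_j$ copies and distributing incident edges so each copy has degree at most $k$; after this, Galvin's theorem list-edge-colors the split bipartite multigraph (which has maximum degree $\le k$) from the same lists, and un-splitting makes each color class a simple $b$-matching. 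That is shorter and more elementary, at the cost of being tied very specifically to generalized partition matroids, whereas the paper's kernel route is the one more likely to extend toward Conjecture~\ref{conj:berczi} for general matroids.

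Two remarks on the details. First, your opening paragraph about $\Delta(\M_1\cap\M_2)$ is on shaky ground: $\M_1\cap\M_2$ is in general not a matroid, so the span/expansion-number machinery and Theorem~\ref{thm:williams} do not apply to it, and the identity $|span_{\M_1\cap\M_2}(S)|=\max(|span_{\M_1}(S)|,|span_{\M_2}(S)|)$ is not well posed. Fortunately this paragraph is unnecessary for your proof: once you have $\chi_\ell(\M_1\cap\M_2)\le k$, the chain $k\le\chi(\M_1\cap\M_2)\le\chi_\ell(\M_1\cap\M_2)\le k$ closes, using only the trivial lower bound that any $(\M_1\cap\M_2)$-respecting coloring is $\M_i$-respecting for each $i$. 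Second, the step you flag as the "only genuinely non-routine part" --- distributing edges to copies so both endpoints' degree bounds hold simultaneously --- is actually routine and needs no Euler-tour argument: the choice of which copy of $x$ an edge $\{x,y\}$ is assigned to and the choice of which copy of $y$ it is assigned to are completely independent, so you just partition the $\deg_G(v)\le k\,b(v)$ edges at each vertex $v$ into $b(v)$ bins of size at most $k$, separately at each vertex, and take the resulting bipartite multigraph on the copies. There is no coupling between the two sides to worry about.
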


\section{A graph terminology formulation}

A \emph{graph} is a pair $G=(V,E)$, where~$V$ is a finite set and~$E$ is a family of unordered pairs from~$V$. The elements of~$V$ are called the \emph{vertices} and the elements of~$E$ are called \emph{edges}. In the definition of graph we use the term ``family" rather than ``set", to indicate that the same pair of vertices may occur several times in~$E$. A pair occurring more than once in~$E$ is called a \emph{multiple} edge. To emphasize this, sometimes we call a graph \emph{multigraph} if it has multiple edges.

A graph is \emph{bipartite} if its vertex set can be divided into two parts such that neither of the parts contains an edge of the graph.

    Given a bipartite multigraph $G$ and a function $b:V(G)\rightarrow \mathbb{Z}_+$, a \emph{simple $b$-matching} is an edge subset~$F$ of~$G$ such that $deg_F(v):=|\{e\in F\mid v\in e \}|\le b_v$ for every $v\in V(G)$.
When $b\equiv 1$,  ``simple $b$-matching'' is just the familiar notion of ``matching''.

Let $\M_1$ and~$\M_2$ be generalized partition matroids on the same ground set~$U$, where~$\M_1$ has parts $\cS_1,\dots, \cS_a$ with respective constraints~$p_1,\dots, p_a$ and~$\M_2$ has parts~$\cT_1,\dots, \cT_b$ with respective constraints~$q_1,\dots, q_b$. We construct a bipartite graph~$G$ whose two sides of are $\{\cS_1,\dots,\cS_a\}$ and $\{\cT_1,\dots, \cT_b\}$, and each edge~$e_u=\{\cS_i, \cT_j\}$ corresponds to an element $u\in U$ such that $u\in \cS_i\cap \cT_j$. Set $b(\cS_i)=p_i$ for each $1\le i\le a$ and $b(\cT_j)=q_j$ for each $1\le j\le b$. Then a face in~$\M_1\cap\M_2$ corresponds to a simple $b$-matching in~$G$.

For the other way around, given a bipartite multigraph~$G$ with two sides~$X$ and~$Y$, and a function $b:V(G)\rightarrow\mathbb{Z}_+$, we can construct two generalized partition matroids~$\M_1$ and~$\M_2$ on ground set~$E(G)$ such that the parts of~$\M_1$ are $\Gamma(x):=( \{x,y \}  )_{\{x,y\}\in E(G)}$, i.e., the family of all edges incident with~$x$, with constraint $b(x)$ for each $x\in X$, and the parts of~$\M_2$ are $\Gamma(y):=( \{x,y \}  )_{\{x,y\}\in E(G)}$ with constraint~$b(y)$ for each $y\in Y$. Then a simple~$b$-matching in~$G$ is a face in~$\M_1\cap\M_2$, and vice versa. 

Together with~\refL{lemma:Delta} in the next section, some immediate results of~\refT{thm:main} are the following.

\begin{corollary}
For a bipartite multigraph~$G$ and $b:V(G)\rightarrow\mathbb{Z}_+$,
    the minimal number of simple $b$-matchings whose union is $E(G)$ is
    $\lceil \max_{v\in V(G)}\frac{deg_G(v)}{b(v)}\rceil$.
\end{corollary}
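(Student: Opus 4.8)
The plan is to derive this directly from \refT{thm:main}, \refT{thm:williams}, and \refL{lemma:Delta}, by running the dictionary established in this section. First I would apply the construction above to the pair $(G,b)$: it produces two generalized partition matroids $\M_1,\M_2$ on the ground set $E(G)$, the parts of $\M_1$ being the stars $\Gamma(x)$ for $x\in X$ with constraints $b(x)$, and the parts of $\M_2$ being the stars $\Gamma(y)$ for $y\in Y$ with constraints $b(y)$. Because $b(v)\ge 1$ for every vertex, each single edge is a face of $\M_1\cap\M_2$, so the ground set of the complex $\M_1\cap\M_2$ is exactly $E(G)$. As observed above, the faces of $\M_1\cap\M_2$ are precisely the simple $b$-matchings of $G$; hence a family of simple $b$-matchings covering $E(G)$ is exactly an $(\M_1\cap\M_2)$-respecting coloring, and the number we want is $\chi(\M_1\cap\M_2)$.

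Next I would apply \refT{thm:main} to get $\chi(\M_1\cap\M_2)=\max\bigl(\chi(\M_1),\chi(\M_2)\bigr)$, and then \refT{thm:williams} to write $\chi(\M_i)=\lceil\Delta(\M_i)\rceil$. What remains is to evaluate $\Delta(\M_1)$ and $\Delta(\M_2)$, and this is exactly what \refL{lemma:Delta} is meant to supply: for a generalized partition matroid with parts $P_1,\dots,P_a$ and constraints $p_1,\dots,p_a$ it identifies $\Delta$ as $\max_i |P_i|/p_i$ (the point being that $span_\M(S)$ is $S$ together with every part that $S$ fills, so a single part is extremal by the mediant inequality). Since $|\Gamma(v)|=deg_G(v)$, this gives $\Delta(\M_1)=\max_{x\in X}deg_G(x)/b(x)$ and $\Delta(\M_2)=\max_{y\in Y}deg_G(y)/b(y)$.

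Finally, using that $t\mapsto\lceil t\rceil$ is nondecreasing, so that $\max(\lceil s\rceil,\lceil t\rceil)=\lceil\max(s,t)\rceil$, I would conclude
\[
\chi(\M_1\cap\M_2)=\max\bigl(\lceil\Delta(\M_1)\rceil,\lceil\Delta(\M_2)\rceil\bigr)=\Bigl\lceil\max\bigl(\Delta(\M_1),\Delta(\M_2)\bigr)\Bigr\rceil=\Bigl\lceil\max_{v\in V(G)}\frac{deg_G(v)}{b(v)}\Bigr\rceil,
\]
which is the claim. I do not expect a genuine obstacle here, since the statement is essentially a repackaging of the main theorem; the only point that calls for a line of care is the degenerate bookkeeping --- an isolated vertex contributes an empty part and a ratio $0$, and if $G$ has no edges the answer is $0$ matchings, matching $\lceil 0\rceil$ --- which one can dispatch either by deleting isolated vertices at the outset or by noting that the outer ceiling absorbs the fact that $\Delta$ of a nonempty matroid is always at least $1$.
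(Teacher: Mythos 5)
Your proof is correct and follows exactly the route the paper intends: translate $(G,b)$ into the intersection of two generalized partition matroids on $E(G)$, identify simple $b$-matchings with faces, then chain \refT{thm:main}, \refT{thm:williams}, and \refL{lemma:Delta} together with the identity $\max(\lceil s\rceil,\lceil t\rceil)=\lceil\max(s,t)\rceil$. The remark about isolated vertices is a reasonable bit of bookkeeping that the paper leaves implicit, but nothing substantive is different.
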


Given a bipartite multigraph $G$  and $b:V(G)\rightarrow \mathbb{Z}_+$, the \emph{list $b$-edge chromatic number} is the minimum~$k$ such that for any lists $L=(L_e:e\in E(G))$ with $|L_e|=k$ for each~$e\in E(G)$, there exists a choice function~$f:E(G)\rightarrow \cup_{e\in E(G)}L_e$ such that $f(e)\in L_e$ for each $e\in E(G)$ and $f^{-1}(c)$ is a simple $b$-matching for each $c\in \cup_{e\in E(G)}L_e$.

\begin{corollary}
Given a bipartite multigraph $G$  and $b:V(G)\rightarrow \mathbb{Z}_+$, 
the list $b$-edge chromatic number is
\[\Big\lceil \max_{v\in V(G)}\frac{deg_G(v)}{b(v)}\Big\rceil.\]
\end{corollary}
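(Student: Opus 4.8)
The plan is to deduce the statement directly from \refT{thm:main}, via the graph-to-matroid dictionary set up in this section, and then to evaluate the resulting quantity using \refT{thm:williams} and \refL{lemma:Delta}. First, given the bipartite multigraph $G$ with sides $X,Y$ and $b\colon V(G)\to\mathbb{Z}_+$, I would form the two generalized partition matroids $\M_1,\M_2$ on ground set $E(G)$ as described above: $\M_1$ has parts $\Gamma(x)$ ($x\in X$) with constraints $b(x)$, and $\M_2$ has parts $\Gamma(y)$ ($y\in Y$) with constraints $b(y)$. As already observed, the faces of $\M_1\cap\M_2$ are precisely the simple $b$-matchings of $G$. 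Hence a list coloring $f\colon E(G)\to\bigcup_{e\in E(G)}L_e$ has every color class $f^{-1}(c)$ a simple $b$-matching if and only if $f$ is $(\M_1\cap\M_2)$-respecting, so the list $b$-edge chromatic number of $(G,b)$ is exactly $\chi_\ell(\M_1\cap\M_2)$.

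Next, \refT{thm:main} gives $\chi_\ell(\M_1\cap\M_2)=\max\big(\chi(\M_1),\chi(\M_2)\big)$, and \refT{thm:williams} gives $\chi(\M_i)=\lceil\Delta(\M_i)\rceil$ for $i=1,2$. By \refL{lemma:Delta}, which evaluates $\Delta$ of a generalized partition matroid as the largest size-to-constraint ratio over its parts (bounded below by the trivial value $1$), we get $\Delta(\M_1)=\max_{x\in X}\frac{\deg_G(x)}{b(x)}$ and $\Delta(\M_2)=\max_{y\in Y}\frac{\deg_G(y)}{b(y)}$, after the harmless normalization $b(v)\le\deg_G(v)$ (a part with $b(v)>\deg_G(v)$ imposes no constraint, so one may replace $b(v)$ by $\deg_G(v)$ without changing $\M_i$; such a part contributes ratio $1$ and does not affect the ceiling). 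Combining, and using monotonicity of $\lceil\cdot\rceil$,
\[
\max\big(\chi(\M_1),\chi(\M_2)\big)
=\max\big(\lceil\Delta(\M_1)\rceil,\lceil\Delta(\M_2)\rceil\big)
=\Big\lceil\max\big(\Delta(\M_1),\Delta(\M_2)\big)\Big\rceil
=\Big\lceil\max_{v\in V(G)}\tfrac{\deg_G(v)}{b(v)}\Big\rceil,
\]
which is the claimed value.

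I do not expect a genuine obstacle here: all the difficulty is already packaged into \refT{thm:main}, and what remains is bookkeeping. The two points needing a line of care are that the correspondence in the first step is a bijection between the objects that matter (immediate from the construction of $\M_1,\M_2$), and the treatment of degenerate parts with $b(v)>\deg_G(v)$ sketched above. If $G$ has isolated vertices or is edgeless, the maximum in the formula is read over vertices of positive degree, which matches the (trivial) list $b$-edge chromatic number in that case.
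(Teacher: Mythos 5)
Your proposal is correct and is exactly the argument the paper intends: the paper gives no explicit proof, stating only that the corollary is an "immediate result" of Theorem~\ref{thm:main} combined with Lemma~\ref{lemma:Delta} and the graph-to-matroid dictionary, and you have supplied precisely that chain of reasoning. Your parenthetical about parts with $b(v)>\deg_G(v)$ is a legitimate (if minor) observation — Lemma~\ref{lemma:Delta} as stated can output a value below $1$ in that degenerate case even though $\Delta(\M)\ge1$ always — but as you correctly note the ceiling is unaffected, so the formula in the corollary holds regardless.
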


\section{The chromatic number}

From the definitions there follows:
\begin{lemma}\label{lemma:Delta}
    If $\M$ is a generalized partition matroid  with parts $P_1, \ldots ,P_a$ and constraints $p_1, \ldots,p_a$, then  $\Delta(\M)=\max_{1\le i\le a} \frac{|P_i|}{p_i}$.
\end{lemma}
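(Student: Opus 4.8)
The plan is to compute $span_\M(S)$ explicitly for an arbitrary nonempty $S\subseteq V$ and then optimize the ratio $|span_\M(S)|/|S|$. First I would observe that, since the part $P_i$ with constraint $p_i$ contributes independently in a generalized partition matroid, the spanning operator decomposes over the parts: for $S\subseteq V$ write $S_i = S\cap P_i$, and I claim $span_\M(S) = \bigcup_i span_\M(S_i)$, where for each $i$ we have $span_\M(S_i) = P_i$ if $|S_i|\ge p_i$, and $span_\M(S_i) = S_i$ if $|S_i| < p_i$. Indeed, an element $x\in P_i\setminus S$ lies in $span_\M(S)$ iff there is an independent $I\subseteq S$ with $I\cup\{x\}\notin\M$; since the only constraint $x$ can violate is the one on $P_i$, this happens iff $S_i$ contains an independent set of size $p_i$, i.e. iff $|S_i|\ge p_i$.

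Next I would use this to bound the ratio. If for every $i$ we have $|S_i| < p_i$, then $span_\M(S) = S$ and the ratio is $1$, which is at most $\max_i |P_i|/p_i$ (note each $|P_i|/p_i \ge 1/p_i$; and in fact if some part is nonempty the max is $\ge 1$, while if $V=\emptyset$ the statement is vacuous — more carefully, for the parts where $S_i\neq\emptyset$ we trivially have $|P_i|/p_i \ge |S_i|/|S_i| \ge$ contribution, so the bound holds). Otherwise, let $J = \{i : |S_i|\ge p_i\}\neq\emptyset$. Then
\[
\frac{|span_\M(S)|}{|S|} = \frac{\sum_{i\in J}|P_i| + \sum_{i\notin J}|S_i|}{\sum_{i\in J}|S_i| + \sum_{i\notin J}|S_i|} \le \frac{\sum_{i\in J}|P_i|}{\sum_{i\in J}|S_i|} \le \frac{\sum_{i\in J}|P_i|}{\sum_{i\in J}p_i} \le \max_{i\in J}\frac{|P_i|}{p_i} \le \max_{1\le i\le a}\frac{|P_i|}{p_i},
\]
where the first inequality drops the equal terms $\sum_{i\notin J}|S_i|$ from numerator and denominator (valid since the numerator fraction $\sum_{i\in J}|P_i|/\sum_{i\in J}|S_i|\ge 1$ because $|P_i|\ge|S_i|$), the second uses $|S_i|\ge p_i$ for $i\in J$, and the third is the mediant/weighted-average inequality $\frac{\sum a_i}{\sum b_i}\le\max_i\frac{a_i}{b_i}$ for positive $b_i$.

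For the reverse inequality, fix $i^*$ achieving $\max_i |P_i|/p_i$ and take $S$ to be any $p_{i^*}$-element subset of $P_{i^*}$ (possible since $|P_{i^*}|\ge p_{i^*}$ as $p_{i^*}\ge 1$; if $|P_{i^*}| < p_{i^*}$ is impossible because constraints are at most part sizes in a nondegenerate matroid — or simply note $|P_{i^*}|/p_{i^*}\ge 1$ forces $|P_{i^*}|\ge p_{i^*}$). Then $span_\M(S) = P_{i^*}$, so $|span_\M(S)|/|S| = |P_{i^*}|/p_{i^*}$, giving $\Delta(\M)\ge\max_i|P_i|/p_i$. I do not expect a serious obstacle here; the only point requiring a little care is the decomposition of $span_\M$ over parts and the edge case where some $|P_i| < p_i$ (which, if allowed, simply means that part never constrains anything and can be treated by the $|S_i| < p_i$ branch). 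Combining the two inequalities yields $\Delta(\M) = \max_{1\le i\le a}|P_i|/p_i$.
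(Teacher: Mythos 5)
Your proof is correct and takes essentially the same approach as the paper: decompose $span_\M$ over the parts $P_i$, use the explicit formula $span_\M(Y)=Y$ or $P_i$ for $Y\subseteq P_i$ according as $|Y|<p_i$ or $|Y|\ge p_i$, apply the mediant inequality, and achieve the bound by taking a $p_{i^*}$-element subset of the maximizing part. One small remark: your handling of the case $J=\emptyset$ tries to derive $\max_i|P_i|/p_i\ge 1$ from $S_i\neq\emptyset$, which does not follow; like the paper's proof, the statement and argument implicitly assume $p_i\le|P_i|$ (the standard nondegeneracy convention for generalized partition matroids), and with that convention the $J=\emptyset$ case is immediate.
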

\begin{proof}
    Suppose $X\subseteq V(\M)$ satisfies 
    \[\frac{|span_{\M}(X)|}{|X|}=\Delta(\M)=\max_{\emptyset\neq S\subseteq V(\M)}\frac{|span_\M(S)|}{|S|}.\]
%Then $A\in\M$, otherwise the maximal independent set~$A'\subseteq A$, which satisfies $span_\M(A')=span_{\M}(A)$, has a larger ratio $\frac{|span_{\M}(A')|}{|A'|}$. 

By induction on the number~$d$, it is easy to prove that for sequences of numbers $(y_\ell)_{1\le \ell\le d}$ and $(x_{\ell})_{1\le \ell \le d}$, if $y_\ell\ge x_\ell>0$ for each $1\le \ell\le d$, then
\[  \frac{\sum_{\ell=1}^d y_\ell}{\sum_{\ell=1}^d x_\ell}\le \max_{1\le \ell \le d}\frac{y_\ell}{x_\ell}. \]
And it is easy to prove that $span_{\M}(X)$ is equal to the disjoint union of~$span_{\M}(X\cap P_i)$ for $1\le i\le a$.
Therefore
\begin{equation}\label{eq:Delta=}
\begin{split}
     \frac{|span_{\M}(X)|}{|X|} &=\frac{\sum_{1\le i\le a: X\cap P_i\neq \emptyset} |span_{\M}(X\cap P_i)|}{\sum_{1\le i\le a:X\cap P_i\neq\emptyset}|X\cap P_i|}\\ 
     &\le \max_{1\le i\le a: X\cap P_i\neq \emptyset}\frac{|span_{\M}(X\cap P_i)|}{|X\cap P_i|}.
\end{split}
\end{equation}
Since for $Y\subseteq P_i$,
\begin{equation*}
    span_{\M}(Y)=\begin{cases}
Y &\text{ if $|Y|<p_i$,}\\
P_i &\text{ if $|Y|\ge p_i$,}        
    \end{cases}
\end{equation*}
the maximum in~\eqref{eq:Delta=} can be attained when $X\subseteq P_i$ and $|X|=p_i$ for some~$i$, which completes the proof.
\end{proof}

We first prove a formula for $\chi(\M_1\cap\M_2)$:
\begin{theorem}\label{thm:chi2GP}
Let $\M_1$ and $\M_2$ be two generalized partition matroids on the same ground set. Then 
    \[ \chi(\M_1\cap\M_2)=\max\big(\chi(\M_1),\chi(\M_2)\big)=\max\Big(\lceil\Delta(\M_1)\rceil,\lceil\Delta(\M_2)\rceil\Big).  \]
\end{theorem}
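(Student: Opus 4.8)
The plan is to prove the two inequalities
\[
\max\big(\chi(\M_1),\chi(\M_2)\big)\ \le\ \chi(\M_1\cap\M_2)\ \le\ \max\big(\lceil\Delta(\M_1)\rceil,\lceil\Delta(\M_2)\rceil\big),
\]
after which the whole chain of equalities follows from \refT{thm:williams}. The lower bound is immediate: $\M_1\cap\M_2$ is a subcomplex of each $\M_i$, so every $(\M_1\cap\M_2)$-respecting coloring is in particular an $\M_i$-respecting coloring, whence $\chi(\M_1\cap\M_2)\ge\chi(\M_i)$ for $i=1,2$. All the work is in the upper bound.

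For the upper bound I would pass to the bipartite-graph model of Section~2: let $G$ be the bipartite multigraph with sides $X$, $Y$ and capacity function $b\colon V(G)\to\mathbb{Z}_+$ whose simple $b$-matchings correspond to the faces of $\M_1\cap\M_2$, so that a $(\M_1\cap\M_2)$-respecting coloring is the same thing as a cover of $E(G)$ by simple $b$-matchings. By \refL{lemma:Delta}, $\Delta(\M_1)=\max_{x\in X}deg_G(x)/b(x)$ and $\Delta(\M_2)=\max_{y\in Y}deg_G(y)/b(y)$; put $k:=\max\big(\lceil\Delta(\M_1)\rceil,\lceil\Delta(\M_2)\rceil\big)$. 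Since $k\ge\lceil\Delta(\M_i)\rceil\ge\Delta(\M_i)$ and $k$ is an integer, we get $k\ge\lceil deg_G(v)/b(v)\rceil$ for every $v\in V(G)$. It now suffices to partition $E(G)$ into at most $k$ simple $b$-matchings.

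The key step is the classical vertex-splitting reduction to K\"onig's bipartite edge-coloring theorem. Replace each vertex $v\in V(G)$ by $b(v)$ clones and distribute the $deg_G(v)$ edges incident with $v$ among these clones as evenly as possible, so that each clone is incident with at most $\lceil deg_G(v)/b(v)\rceil\le k$ of them (a one-line pigeonhole). Doing this on both sides produces a bipartite multigraph $G'$ in which each edge of $G$ becomes exactly one edge of $G'$, and whose maximum degree is at most $k$. Since the chromatic index of a bipartite multigraph equals its maximum degree (K\"onig), $G'$ admits a proper edge-coloring with $k$ colors; transported back, this colors $E(G)$ with $k$ colors. In it, each color class is incident with any clone of $v$ at most once, hence with $v$ itself at most $b(v)$ times, so each color class is a simple $b$-matching. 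Therefore $\chi(\M_1\cap\M_2)\le k$, as required.

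I do not expect a genuine obstacle here; the points needing care are bookkeeping only — that the edges at $v$ can indeed be split into $b(v)$ groups of size $\le\lceil deg_G(v)/b(v)\rceil$, and that the Section~2 dictionary between faces/colorings and simple $b$-matchings/covers is exact (which also absorbs the harmless fact that a $\C$-respecting coloring need not be a partition, since complexes are downward closed). The only external ingredient is K\"onig's edge-coloring theorem; if a self-contained treatment is preferred, the same decomposition can be obtained directly by induction on $|E(G)|$ via an augmenting-path recoloring, but invoking K\"onig is shortest. The real difficulty of the paper should lie in upgrading this to the list-coloring statement of \refT{thm:main}, not here.
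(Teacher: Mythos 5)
Your proof is correct, and it takes a genuinely different route from the paper's. You use the classical vertex-splitting reduction: split each vertex $v$ of the bipartite multigraph into $b(v)$ clones, distribute incident edges evenly (pigeonhole gives each clone degree at most $\lceil deg_G(v)/b(v)\rceil\le k$), and invoke K\"onig's edge-coloring theorem on the resulting bipartite multigraph $G'$ of maximum degree $\le k$; the transported coloring has each color class meeting each $v$ at most $b(v)$ times, i.e., each class is a simple $b$-matching. The paper instead takes a partial covering $N_1,\dots,N_C$ of maximum total size, supposes an element $v$ is uncovered, singles out two classes $N_1,N_2$ with room for $v$ in $\M_1$ and $\M_2$ respectively, builds an auxiliary digraph on the parts containing $N_1\cup N_2\cup\{v\}$, and applies Hoffman's circulation theorem to exhibit an integral circulation that repacks $N_1\cup N_2\cup\{v\}$ into two new faces of $\M_1\cap\M_2$, contradicting maximality. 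Your K\"onig-based argument is shorter and more elementary, reusing a well-known trick; the paper's Hoffman-based argument is more direct on the matroid side (it never needs to verify the vertex-splitting/K\"onig dictionary) and its bookkeeping of the quantities $deg_k(\cS_\ell),deg_k(\cT_r)$ mirrors the $\Gamma_{\D,1},\Gamma_{\D,2}$ bookkeeping used in the subsequent list-coloring proof, so it keeps the two proofs stylistically parallel. Both establish the same inequality $\chi(\M_1\cap\M_2)\le\max(\lceil\Delta(\M_1)\rceil,\lceil\Delta(\M_2)\rceil)$, and your handling of the lower bound and of the reduction via \refL{lemma:Delta} matches the paper.
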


For the proof we need some definitions and notation.

A \emph{directed graph} (\emph{digraph}) is a pair $D=(V,A)$, where~$V$ is a finite set and~$A$ is a family of ordered pairs from~$V$. The elements of~$V$ are called the \emph{vertices} and the elements of~$A$ are called \emph{directed edges}. The \emph{direction} of~$(u,v)\in E$ is from~$u$ to~$v$. And we abbreviate $(u,v)$ as $uv$ in this note.
Again, the term ``family" is to indicate that the same pair of vertices may occur several times in~$A$. A pair occurring more than once in~$A$ is called a \emph{multiple} directed edge.
\begin{notation}
    Given a digraph $D=(V,A)$ and $U\subseteq V$,  let
\[ \delta^{in}(U):=\{ wu\in A\mid w\in V\setminus U, u\in U   \},\]
\[\delta^{out}(U):=\{ uw\in A\mid u\in U, w\in V\setminus U    \}. \]
\end{notation}

\begin{notation}
    For a function~$f:A\rightarrow\mathbb{R}$ and $B\subseteq A$, let $f(B):=\sum_{a\in B}f(a)$.
\end{notation}

\begin{definition}
A function $f:A\rightarrow\mathbb{R}$ on the edge set of a digraph $D=(V,A)$
is  a \emph{circulation} if $f(\delta^{in}(\{v\}))=f(\delta^{out}(\{v\}))$ for every $v\in V$.
\end{definition}

\begin{theorem}[Hoffman's circulation theorem (see, e.g., Theorem 11.2 in~\cite{SchrijverCO})]\label{thm:hoffman}
    Let $D=(V,A)$ be a digraph (allowing multiple directed edges) and let $d,c: A\rightarrow\mathbb{R}$ be functions satisfying $d\le c$. Then there exists a circulation $f$ satisfying $d\le f\le c$ if and only if
    \begin{equation}\label{eq:hoffman}
        d(\delta^{in}(U))\le c(\delta^{out}(U)) 
    \end{equation}
    for each $U\subseteq V$. Moreover, if $d$ and $c$ are integral, then $f$ can be chosen to be integral.
\end{theorem}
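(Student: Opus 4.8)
The plan is to treat the two implications separately. \emph{Necessity} is routine: if $f$ is a circulation with $d\le f\le c$ and $U\subseteq V$, then summing the identities $f(\delta^{in}(\{v\}))=f(\delta^{out}(\{v\}))$ over $v\in U$ causes every directed edge internal to $U$ to cancel, leaving $f(\delta^{in}(U))=f(\delta^{out}(U))$, so $d(\delta^{in}(U))\le f(\delta^{in}(U))=f(\delta^{out}(U))\le c(\delta^{out}(U))$, which is \eqref{eq:hoffman}. All the work is in the reverse direction, which I will handle by a flow--augmentation argument that simultaneously yields the integrality clause.

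For \emph{sufficiency}, assume \eqref{eq:hoffman} holds for every $U\subseteq V$. Call $f:A\to\mathbb{R}$ \emph{feasible} if $d\le f\le c$; such $f$ exist since $d\le c$ (e.g.\ $f=d$). For feasible $f$ and $v\in V$, let $e_f(v):=f(\delta^{in}(\{v\}))-f(\delta^{out}(\{v\}))$ be the excess at $v$; then $\sum_v e_f(v)=0$, and $f$ is a circulation precisely when $e_f\equiv 0$. Introduce the potential $\Phi(f):=\sum_{v\in V}\max(e_f(v),0)$, and the residual digraph $D_f$ on $V$ having an edge $uv$ whenever some $uv\in A$ has $f<c$ (a forward residual edge) or some $vu\in A$ has $f>d$ (a backward residual edge).

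The crux is the claim that \emph{a feasible $f$ with $\Phi(f)>0$ cannot minimize $\Phi$ over the feasible set}. To see this, pick $s$ with $e_f(s)>0$ and $t$ with $e_f(t)<0$ (possible since the excesses sum to $0$), and let $U$ be the set of vertices reachable in $D_f$ from $\{v:e_f(v)>0\}$. If no negative-excess vertex lies in $U$, then $U$ contains every positive-excess vertex and none of negative excess, so $\sum_{v\in U}e_f(v)>0$; but reachability forces $f=c$ on every edge of $\delta^{out}(U)$ and $f=d$ on every edge of $\delta^{in}(U)$, so $\sum_{v\in U}e_f(v)=f(\delta^{in}(U))-f(\delta^{out}(U))=d(\delta^{in}(U))-c(\delta^{out}(U))\le 0$ by \eqref{eq:hoffman}, a contradiction. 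Hence some negative-excess $t$ is reachable from some positive-excess $s$, and pushing a small $\varepsilon>0$ along such a residual $s$--$t$ path (increasing $f$ on its forward residual edges, decreasing it on its backward ones) keeps $f$ feasible, leaves $e_f$ unchanged at the internal vertices, decreases $e_f(s)$ by $\varepsilon$ and increases $e_f(t)$ by $\varepsilon$; for $\varepsilon$ small enough this produces a feasible $f'$ with $\Phi(f')=\Phi(f)-\varepsilon<\Phi(f)$, proving the claim. Given the claim, the \emph{real} case is immediate: $\Phi$ is continuous on the compact set $\{f:d\le f\le c\}$, so it attains a minimum, which must be $0$, i.e.\ a feasible circulation exists. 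For the \emph{integral} case, start from $f=d$ and iterate the augmentation of the claim; when $f,d,c$ are integral, all residual slacks and all excesses are positive integers, so $\varepsilon$ can be taken to be a positive integer, keeping $f$ integral while the nonnegative integer $\Phi(f)$ drops by at least $1$ each step, so after finitely many steps $\Phi=0$.

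The step I expect to need the most care is the extraction of the violating cut from the ``no augmenting path'' case: getting the orientations of forward versus backward residual edges right, checking that $U$ holds all positive-excess and no negative-excess vertex, and reading off $f(\delta^{out}(U))=c(\delta^{out}(U))$ and $f(\delta^{in}(U))=d(\delta^{in}(U))$. A secondary point is termination, which is why the real case is routed through the compactness of $\{d\le f\le c\}$ and the continuity of $\Phi$ rather than through an explicit algorithm, while integrality drops out of the integer-valued version of the same augmentation. (One could instead deduce sufficiency from linear programming duality --- infeasibility of the system ``$f$ a circulation, $d\le f\le c$'' would produce a Farkas certificate that unwinds to a violation of \eqref{eq:hoffman}, with integrality automatic from total unimodularity of the incidence matrix --- but the augmentation argument is more self-contained.)
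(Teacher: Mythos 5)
The paper does not contain a proof of Hoffman's circulation theorem; it is imported as a black box with a citation to Theorem~11.2 of Schrijver's book, so there is no in-paper argument to compare against. Your argument is the standard and correct augmenting-path proof of the result. Necessity follows as you say from telescoping the vertex-balance identity over $U$. For sufficiency, the cut extraction is sound: with $U$ the set reachable in the residual digraph from the positive-excess vertices, the absence of outgoing residual edges forces $f=c$ on $\delta^{out}(U)$ and $f=d$ on $\delta^{in}(U)$, whence $\sum_{v\in U}e_f(v)=f(\delta^{in}(U))-f(\delta^{out}(U))=d(\delta^{in}(U))-c(\delta^{out}(U))\le 0$ by \eqref{eq:hoffman}, contradicting that $U$ contains every positive-excess vertex and no negative-excess vertex. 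The augmentation step preserves excess at internal vertices and, for $\varepsilon\le\min\bigl(e_f(s),\,-e_f(t),\,\text{residual slacks on the path}\bigr)$, decreases $\Phi$ by exactly $\varepsilon$; the real case via compactness and continuity of $\Phi$ and the integral case via the integer-valued $\Phi$ dropping by at least one per step are both fine. The only point worth flagging is that the residual $s$--$t$ path should be taken simple (e.g.\ a shortest one) so that $s$ and $t$ each appear once and the excess bookkeeping at the endpoints is exactly as stated. As you note, one could alternatively obtain the statement from LP duality and total unimodularity of the network matrix, which is closer to the treatment in Schrijver's text; your augmenting-path route is more self-contained and gives integrality directly without invoking unimodularity.
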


%\begin{theorem}\label{thm:chiellchiGP}
%     If  $\M_1$ and $\M_2$ are  two generalized partition matroids on the same ground set, then  
%    \[  \chi_\ell(\M_1\cap\M_2)=\chi(\M_1\cap\M_2)    .  \]
%\end{theorem}

\begin{proof}[Proof of~\refT{thm:chi2GP}]
The second equality in the conclusion of the theorem is by~\refT{thm:williams}. It remains to prove the first one.
   We assume that $\M_1$ has parts $\cS_1,\cdots, \cS_a$ with respective constraints $p_1,\dots,p_a$ and $\M_2$ has parts $\cT_1,\dots,\cT_b$ with respective constraints $q_1,\dots, q_b$. Assume the ground set is~$V$. 
   Therefore $F\subseteq V$ is in~$\M_1\cap \M_2$ if and only if 
   \begin{equation*}%\label{eq:FinC}
       |F\cap \cS_i|\le p_i\quad \text{and}\quad |F\cap \cT_j|\le q_j
   \end{equation*}
    for every $1\le i\le a$ and $1\le j\le b$. 
   By~\refL{lemma:Delta},
     \[\Delta(\M_1)=\max_{1\le i\le a}\frac{|\cS_i|}{p_i} \text{\quad  and \quad} \Delta(\M_2)=\max_{1\le j\le b}\frac{|\cT_j|}{q_j}.\]
     Let $C=\max\big(\chi(\M_1),\chi(\M_2)\big)=\max\Big(\lceil\Delta(M_1)\rceil,\;\lceil\Delta(\M_2)\rceil\Big)$.

It is easy to see that $\chi(\M_1\cap\M_2)\ge C$, since any $\M_1\cap\M_2$-respecting coloring is an $\M_1$-respecting coloring and an $\M_2$-respecting coloring.

For the other direction, let $N_1,\dots, N_C$ be a set of disjoint faces of $\M_1\cap\M_2$ such that their union includes the maximum number of elements of $V$. If $N_1\cup\cdots\cup N_C=V$, we are done.

Suppose there exists $v\in V\setminus (N_1\cup\cdots\cup N_C)$. Then 
\[v\in \cS_{i}\cap \cT_{j}\] 
for some $1\le i\le a$ and $1\le j\le b$. Since 
$C\cdot p_i\ge\Delta(\M_1)\cdot p_i>|\cS_i\setminus\{v\}|$ and $C\cdot q_j>|\cT_j\setminus\{v\}|$,
there exist $1\le \cs\le C$ and $1\le \ct\le C$ such that 
\begin{equation}\label{eq:NsSiNtTj}
    |N_\cs\cap \cS_i|\le p_i-1\quad\text{and}\quad |N_\ct\cap \cT_j|\le q_j-1.
\end{equation}

If $\cs=\ct$, then $N_\cs\cup\{v\}\in \M_1\cap \M_2$, which together with other $N_k$'s includes one more elements of~$V$ than $N_1\cup\cdots\cup N_C$, a contradiction.
Thus $\cs\neq \ct$. 

Furthermore, 
\begin{equation}\label{eq:SiTjin}
    |N_\ct\cap \cS_i|=p_i\quad \text{and}\quad |N_\cs\cap \cT_j|=q_j,
\end{equation}
otherwise either $N_\ct\cup\{v\}$ or $N_\cs\cup\{v\}$ is in~$\M_1\cap \M_2$, which together with other~$N_k$'s includes one more elements of~$V$ than $N_1\cup\cdots\cup N_C$, a contradiction.

Without loss of generality, we may assume $\cs=1$ and $\ct=2$.
We construct a bipartite multigraph~$G$ as the following. 
Let \[L:=\{\cS_\ell\mid \text{ there exists $u\in N_1\cup N_2$ such that $u\in \cS_\ell$}\},\]
 \[R:=\{\cT_r \mid \text{ there exists $u\in N_1\cup N_2$ such that $u\in \cT_r$} \} \]
be the two parts of the vertex set of~$G$. Especially $\cS_i\in L$ and $\cT_j\in R$ by~\eqref{eq:SiTjin}.
Then let each edge~$e$ of the bipartite graph~$G$ between~$\cS_\ell$ in~$L$ and~$\cT_r$ in~$R$ represent an element $u\in N_1\cup N_2$ such that $u\in \cS_\ell\cap \cT_r$. So the number of edges between~$\cS_\ell$ and~$\cT_r$ is the number of $u\in N_1\cup N_2$ such that $u\in \cS_\ell\cap \cT_r$.
 Furthermore for $k=1,2$, each $\cS_\ell\in L$, and each $\cT_r\in R$,
 we denote 
 \begin{equation}\label{eq:degkST}
     deg_{k}(\cS_\ell):=|N_k\cap \cS_\ell|\quad\text{and}\quad deg_{k}(\cT_r):=|N_k\cap \cT_r|,
 \end{equation}
 and for every $L'\subseteq L$ and $R'\subseteq R$, we denote
 $N_k(L',R')$ for the family of edges of~$G$ that are between~$L'$ and~$R'$ representing an element in~$N_k$.
We have 
\begin{equation}\label{eq:degk}
    deg_{k}(\cS_\ell)\le p_\ell\quad \text{and}\quad deg_{k}(\cT_r)\le q_r
\end{equation}
for every $\cS_\ell\in L$ and $\cT_r\in R$ and $k=1,2$.

Then based on~$G$, we construct a digraph $D=(V(D),A)$ by directing the edges of~$G$ from the $L$-side to the $R$-side, and on each such directed edge~$e$ we set $d(e)=0$ and $c(e)=1$. (To avoid clutter, we use the same notation~$e$ for such edge in~$G$ and in~$D$.) We add two new vertices $s$ and $t$, and direct $s$ to all vertices in $L$ with 
\[d(s\cS_\ell)=deg_1(\cS_\ell)+deg_2(\cS_\ell)-p_\ell \quad \text{and}\quad c(s\cS_\ell)=p_\ell,\] and direct all vertices in $R$ to $t$ with 
\[d(\cT_rt)=deg_1(\cT_r)+deg_2(\cT_r)-q_r\quad \text{and} \quad c(\cT_rt)=q_r.\] 
And we direct $t$ to $s$ with $d(ts)=0$ and $c(ts)=\infty$. Thus $V(D)=V(G)\cup\{s,t\}$.
Then $N_1\cup N_2$ induces a circulation~$f$ satisfying $d\le f\le c$ in the following way.
For each edge $e$ between $L$ and $R$, we set
\[ f(e) = \begin{cases}
    1 & \text{if the element represented by $e$ is in $N_1$,} \\
    0 & \text{if the element represented by $e$ is in $N_2$.}
  \end{cases}
\]
 We set 
 \[f(s\cS_\ell)=deg_1(\cS_\ell),\quad f(\cT_rt)=deg_1(\cT_r),\quad \text{and}\quad f(ts)=|N_1|.\] 
 Then it can be verified that $f$ is a circulation and satisfies $d\le f\le c$ by~\eqref{eq:degk} and~\eqref{eq:degkST}.
Then by~\refT{thm:hoffman}, the condition~\eqref{eq:hoffman} holds for every vertex subset~$U$ of~$D$.

Next based on~$D$, we construct a digraph~$D'$ by adding one more directed edge~$e_v$ from~$\cS_i$ to~$\cT_j$ representing $v$. Thus $V(D')=V(D)$ and $A'=A\cup\{e_v\}$. We set $d(e_v)=0$ and $c(e_v)=1$. We set 
\[d(s\cS_i)=deg_1(\cS_i)+deg_2(\cS_i)+1-p_i,\]
\[d(\cT_jt)=deg_1(\cT_j)+deg_2(\cT_j)+1-q_j,\] and $c,d$ on other edges are same as those of $A$.
Note that in $A'$, by~\eqref{eq:degk} and $deg_1(\cS_i)\le p_i-1$ via assumption~\eqref{eq:NsSiNtTj} and~\eqref{eq:degkST}, we still have 
\[d(s\cS_i)=deg_1(\cS_i)+deg_2(\cS_i)+1-p_i\le p_i= c(s\cS_i).\] 
Similarly $d(\cT_jt)\le c(\cT_jt)$. Therefore $d\le c$ for the integral~$d$ and~$c$.

If we can check that for such $D',d,c$, the condition~\eqref{eq:hoffman} holds for every $U\subseteq V(D')$, then there exists an integral circulation~$g$ satisfying $d\le g\le c$. Especially, on the edges between $L$ and $R$, $g$ is zero-one. Then let $M$ be the set of elements represented by the edges with $g$-value 1, and $N$ be those of $g$-value 0. The condition that $g\le c$ guarantees that $M\in \M_1\cap\M_2$. On the other hand, for $\ell\neq i$,  
\begin{align*}
    |N\cap \cS_\ell|&=|N_1\cap \cS_\ell|+|N_2\cap \cS_\ell|-|M\cap \cS_\ell| \\
                  &\le deg_1(\cS_\ell)+deg_2(\cS_\ell)-d(s\cS_\ell)=p_\ell
\end{align*}
and 
\begin{align*}
    |N\cap \cS_i|&=|N_1\cap \cS_\ell|+|N_2\cap \cS_\ell|+1-|M\cap \cS_\ell|\\
               &\le deg_1(\cS_i)+deg_2(\cS_i)+1-d(s\cS_i)\le p_i.
\end{align*}
Similarly, $|N\cap \cT_r|\le q_r$ for each $r$. Therefore $N\in\M_1\cap \M_2$. But $M\cup N= N_1\cup N_2\cup\{v\}$, which together with other~$N_k$'s including more elements of~$V$ than $N_1\cup\cdots\cup N_C$, a contradiction. Therefore we prove that $\chi(\M_1\cap\M_2)=C$.

It remains to verify~\eqref{eq:hoffman} for every vertex subset~$U$ of~$D'$. Given $U\subseteq V(D')$, let 
\[L_U=L\cap U\quad \text{and}\quad R_U=R\cap U.\]
We claim that it is enough to verify~\eqref{eq:hoffman} when either~$U$ includes both~$s$ and~$t$, or~$U$ includes neither. Indeed, suppose $U$ includes $t$ but not $s$, then $c(\delta^{out}(U))\ge c(ts)=\infty$, in which case~\eqref{eq:hoffman} holds.
Suppose $U$ includes~$s$ but not~$t$, then the two directed edges~$s\cS_i$ and~$\cT_jt$ are not in~$\delta^{in}(U)$, and these are the only two directed edges whose~$d$-values increase in~$A'$ compared to those in~$A$. And since the $c(\delta^{out}(U))$ is non-decreasing in~$A'$ compared to that in~$A$, then the fact that the condition~\eqref{eq:hoffman} holds for $U\subseteq V(D)$ implies that the condition~\eqref{eq:hoffman} holds for such~$U\subseteq V(D')$.

First, we consider the case that $U$ includes exactly one vertex of $\{\cS_i,\cT_j\}$.

If $\cT_j\in U$ and $\cS_i\not\in U$, then in either case, $s,t\in U$ or $s,t\not\in U$, we have $s\cS_i, \cT_jt\not\in \delta^{in}(U)$. These are the only two edges whose $d$-values in~$A'$ increases compared to those in~$A$, 
therefore the condition~\eqref{eq:hoffman} holds for such~$U$ in~$D$ implies its validity in~$D'$.

If $\cS_i\in U$ and $\cT_j\not\in U$, then when $s, t\in U$, $s\cS_i\not\in\delta^{in}(U)$, $\cT_jt\in \delta^{in}(U)$ and $e_v\in \delta^{out}(U)$, therefore comparing to the condition~\eqref{eq:hoffman} for~$D$, in~$D'$, both $d(\delta^{in}(U))$ and $c(\delta^{out}(U))$ increase by one, which implies the condition~\eqref{eq:hoffman} still holds for such~$U$ in~$D'$. Similarly when $s,t\not\in U$, $s\cS_i\in\delta^{in}(U)$, $\cT_jt\not\in\delta^{in}(U)$, and $e_v\in\delta^{out}(U)$, so both $d(\delta^{in}(U))$ and $c(\delta^{out}(U))$ increase by one in~$D'$ compared to that in~$D$, therefore the condition~\eqref{eq:hoffman} for such~$U$ in~$D'$ holds. 

The final case is when~$U$ includes both~$\cS_i$ and $\cT_j$, or none of them.

If $\cS_i,\cT_j,s,t\in U$ or $\cS_i,\cT_j,s,t\not\in U$, none of the edges $s\cS_i,e_v,\cT_jt$ is between~$U$ and~$V(D')\setminus U$, then the condition~\eqref{eq:hoffman} for such~$U$ in~$D'$ is same as that for~$D$, which is true.

If $\cS_i,\cT_j\in U$ and $s,t\not\in U$. Then
\[d(\delta^{in}(U))=\sum_{\cS_\ell\in L_U}d(s\cS_\ell)  \] 
as those edges between~$L$ and~$R$ have $d$-value zero, and
\[ c(\delta^{out}(U))=\sum_{e\in E(A): e\in L_U\times (R\setminus R_U)}c(e)+\sum_{\cT_r\in R_U}c(\cT_rt).  \]
Therefore
\begin{align*}
    d(\delta^{in}(U))\le c(\delta^{out}(U))
\end{align*}
is equivalent to
\begin{equation}\label{eq:step0}
\begin{split}
      &\sum_{\cS_\ell\in L_U\setminus\{S_i\}}\Big(deg_1(\cS_\ell)+deg_2(\cS_\ell)-p_\ell\Big)+\Big(deg_1(\cS_i)+deg_2(\cS_i)+1-p_i\Big) \\
  \le & N_1(L_U, R\setminus R_U)+N_2(L_U, R\setminus R_U)+\sum_{\cT_r\in R_U}q_r.
\end{split}
\end{equation}
Because $N_1(L_U, R\setminus R_U)+N_2(L_U, R\setminus R_U)=\sum_{\cS_\ell\in L_U}\Big(deg_1(\cS_\ell)+deg_2(\cS_\ell)\Big)-N_1(L_U, R_U)-N_2(L_U,R_U)$,~\eqref{eq:step0} is equivalent to
\[ 1+N_1(L_U, R_U)+N_2(L_U,R_U)\le \sum_{\cS_\ell\in L_U}p_\ell+\sum_{\cT_r\in R_U}q_r,  \]
which is true, since $N_2(L_U,R_U)\le \sum_{\cT_r\in R_U}q_r$, and $deg_1(\cS_i)\le p_i-1$ by~\eqref{eq:NsSiNtTj} and~\eqref{eq:degkST} so that $1+N_1(L_U, R_U)\le p_i+\sum_{\cS_\ell\in L_u\setminus\{\cS_i\}}p_\ell=\sum_{\cS_\ell\in L_U}p_\ell$.

If $\cS_i,\cT_j\not\in U$ and $s,t\in U$. Then the condition~\eqref{eq:hoffman}
is equivalent to
\begin{equation}\label{eq:step1}
    \begin{split}
         &\sum_{\cT_r\in R\setminus R_U}\Big(deg_1(\cT_r)+deg_2(\cT_r)-q_r\Big)+1 \\
  \le & \sum_{\cS_\ell\in L\setminus L_U}p_\ell +|N_1(L_U, R\setminus R_U)|+|N_2(L_U, R\setminus R_U)|.
    \end{split}
\end{equation}
Since
\begin{align*}
    &|N_1(L_U, R\setminus R_U)|+|N_2(L_U, R\setminus R_U)|\\
    =& \sum_{\cT_r\in R\setminus R_U}\Big(deg_1(\cT_r)+deg_2(\cT_r) \Big)-|N_1(L\setminus L_U, R\setminus R_U)|-|N_2(L\setminus L_U,R\setminus R_U)|,
\end{align*}
\eqref{eq:step1} is equivalent to
\[  1+|N_1(L\setminus L_U, R\setminus R_U)|+|N_2(L\setminus L_U,R\setminus R_U)|\le\sum_{\cS_\ell\in L\setminus L_U}p_\ell+\sum_{\cT_r\in R\setminus R_U}q_r,  \]
which is true, since 
$|N_1(L\setminus L_U, R\setminus R_U)|\le\sum_{\cS_\ell\in L\setminus L_U}p_\ell$, and
$deg_2(\cT_j)\le q_j-1$ by~\eqref{eq:NsSiNtTj} and~\eqref{eq:degkST} implies 
$ 1+|N_2(L\setminus L_U,R\setminus R_U)|\le q_j+\sum_{\cT_r\in (R\setminus R_U)\setminus\{\cT_j\}}q_r=\sum_{\cT_r\in R\setminus R_U}q_r$,
similar as before.
\end{proof}

\section{The list chromatic number}
\begin{theorem}\label{thm:chiellchiGP}
    Let $\M_1$ and $\M_2$ be two generalized partition matroids on the same ground set. Then
    \[ \chi_\ell(\M_1\cap\M_2)=\chi(\M_1\cap \M_2).  \]
\end{theorem}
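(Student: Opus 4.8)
The plan is to reduce \refT{thm:chiellchiGP} to Galvin's theorem (\refT{thm:galvin}) by a vertex-splitting device that turns simple $b$-matchings into ordinary matchings. Since $\chi_\ell(\M_1\cap\M_2)\ge\chi(\M_1\cap\M_2)$ always, and $\chi(\M_1\cap\M_2)=C:=\max\big(\chi(\M_1),\chi(\M_2)\big)$ by \refT{thm:chi2GP}, it suffices to show that for arbitrary lists $(L_u)_{u\in V}$ with $|L_u|=C$ there is an $\M_1\cap\M_2$-respecting list coloring. First I would move to the graph formulation of Section~2: $\M_1\cap\M_2$ is the family of simple $b$-matchings of a bipartite multigraph $G$ with $b:V(G)\to\mathbb{Z}_+$, its ground-set elements being the edges of $G$; and by \refL{lemma:Delta} together with the correspondence $\Delta(\M_1)=\max_{x}\deg_G(x)/b(x)$, $\Delta(\M_2)=\max_{y}\deg_G(y)/b(y)$, one has $C=\big\lceil\max_{v\in V(G)}\deg_G(v)/b(v)\big\rceil$.

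Next I would build a bipartite multigraph $G'$ from $G$ by splitting each vertex $v$ into $b(v)$ copies $v^{(1)},\dots,v^{(b(v))}$ and distributing the $\deg_G(v)$ edges at $v$ among these copies as evenly as possible, so that every copy receives at most $\lceil\deg_G(v)/b(v)\rceil\le C$ of them. An edge $e=\{x,y\}$ of $G$ then becomes an edge $e'=\{x^{(i)},y^{(j)}\}$ of $G'$, where $i$ and $j$ are determined by the chosen distributions at $x$ and at $y$ separately; the two distributions are chosen independently, so $G'$ is well defined and $e\mapsto e'$ is a bijection from $E(G)$ to $E(G')$. I only need two facts: $G'$ is bipartite with maximum degree at most $C$; and every matching $M'$ of $G'$ pulls back to a simple $b$-matching of $G$, since each copy of $v$ meets at most one edge of $M'$, whence $v$ meets at most $b(v)$ edges of the image of $M'$ in $G$.

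Then I would apply \refT{thm:galvin} to the two partition matroids carried by $G'$ in the sense of Section~2 (the stars at the copies on each side, all constraints equal to $1$): their intersection is exactly the family of matchings of $G'$, so $\chi_\ell$ of this intersection equals its $\chi$, which is the edge chromatic number $\chi'(G')=\Delta(G')\le C$. Transporting the given lists along the bijection, $L_{e'}:=L_e$, and using $|L_{e'}|=C\ge\Delta(G')$, Galvin's theorem yields a proper edge list coloring $f'$ of $G'$; pulling $f'$ back along $e\mapsto e'$ gives a list coloring $f$ of the ground set with $f(e)\in L_e$ for every $e$, and each color class $f^{-1}(c)$, being the image of a matching of $G'$, is a simple $b$-matching of $G$, i.e.\ a face of $\M_1\cap\M_2$. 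Hence $\chi_\ell(\M_1\cap\M_2)\le C$, which completes the argument.

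The part I expect to carry the real weight is the passage to the graph model together with verifying the two properties of the split graph $G'$; once those are in place the conclusion is a black-box application of \refT{thm:galvin}. If one wished to avoid citing \refT{thm:galvin}, the same proof goes through by establishing directly that the line graph $L(G')$ of the bipartite multigraph $G'$ admits a kernel-perfect orientation whose out-degrees are all less than $C$ (Galvin's lemma) and then invoking the kernel method of Bondy--Boppana--Siegel; but since \refT{thm:galvin} is already available I would simply quote it.
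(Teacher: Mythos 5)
Your proof is correct, and it takes a genuinely different route from the paper. The paper proves the theorem from scratch by adapting Galvin's kernel method to the matroidal setting: it fixes a $\chi(\M_1\cap\M_2)$-coloring $N_1,\dots,N_C$, uses it to impose two opposite linear orders $<_1,<_2$ on the ground set, invokes Fleiner's matroid kernel theorem to find, for each color $c$, a kernel $K$ in the subcomplex of elements with $c$ in their lists, and runs a double induction on parameters $T_\D(v)$ that track how many colors each element still ``deserves.'' Your proof instead exploits the bipartite graph model of Section~2 together with the classical vertex-splitting (clone-vertex) reduction from simple $b$-matchings to ordinary matchings: replacing each vertex $v$ by $b(v)$ copies and distributing $\deg_G(v)$ incident edges as evenly as possible gives a bipartite multigraph $G'$ with $\Delta(G')\le\lceil\max_v\deg_G(v)/b(v)\rceil=C$, every matching of $G'$ pulls back to a simple $b$-matching of $G$, and so Galvin's theorem (plus K\"onig) applied to $G'$ directly supplies the required list coloring. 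Both arguments are sound; the key facts you need — that $\Delta(G')\le C$ under even distribution and that matchings of $G'$ push forward to $b$-matchings of $G$ — are elementary and you state them correctly.

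What each approach buys: your reduction is shorter and uses Galvin's theorem as a true black box, and it in fact proves Theorem~\ref{thm:chi2GP} simultaneously (since $\max(\chi(\M_1),\chi(\M_2))\le\chi(\M_1\cap\M_2)\le\chi_\ell(\M_1\cap\M_2)\le C=\max(\chi(\M_1),\chi(\M_2))$ forces all equalities), so the Hoffman-circulation argument in Section~3 would also become unnecessary. However, the reduction is inherently tied to the structure of generalized partition matroids — the vertex-splitting device has no analogue for arbitrary matroids — whereas the paper's Fleiner-kernel argument is built on tools (matroid kernels, induced subcomplexes) that make sense for general matroids and therefore illustrate a possible line of attack on Conjecture~\ref{conj:berczi} itself. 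One stylistic point worth tightening in your write-up: rather than appealing to $\chi'(G')=\Delta(G')$ by K\"onig and then Galvin separately, you can simply note that the two partition matroids on $E(G')$ have $\Delta$-values (in the sense of Lemma~\ref{lemma:Delta}) equal to the maximum degrees on the two sides of $G'$, both $\le C$, and then cite Theorems~\ref{thm:williams} and~\ref{thm:galvin} in the paper's own vocabulary.
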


The proof of~\refT{thm:chiellchiGP} below combines the idea of Galvin's proof of \refT{thm:galvin} and a theorem of Fleiner on a matroidal version of the Gale--Shapley stable matching theorem.

To state the Fleiner's theorem,
an \emph{ordered matroid} $(\M,<)$ is a matroid $\M$ together with a linear order~$<$ on the ground set. A subset $D\subseteq V(\M)$ \emph{dominates} $v\in V(\M)$ by $\M$ if $v\in D$ or there exists $I\in \M[D]$ such that $\{v\}\cup I\not\in \M$ and $u<v$ for every $u\in I$. Given two ordered matroids $(\M_1, <_1),(\M_2, <_2)$ on the same ground set $V$, a subset $K$ of $V$ is called a \emph{kernel} if $K\in \M_1\cap \M_2$ and $K$ dominates each element $v\in V$ by~$\M_1$ or by~$\M_2$. 

\begin{theorem}[Theorem 2 in~\cite{Fleiner}]\label{thm:MSM}
    Any pair of ordered matroids on the same ground set has a kernel.
\end{theorem}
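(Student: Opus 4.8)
The plan is to prove this by the matroidal deferred-acceptance (Gale--Shapley) argument of Fleiner, realized as a Knaster--Tarski fixed point. First I would attach to each ordered matroid $(\M,<)$ a greedy choice function: for $S\subseteq V(\M)$, let $C_\M(S)$ be obtained by scanning the elements of $S$ in $<$-\emph{increasing} order and retaining each one iff the set retained so far stays independent in $\M$, and put $R_\M(S):=S\setminus C_\M(S)$. Only three elementary facts are needed. (i) $C_\M(S)$ is a basis of $S$ in $\M$; in particular $C_\M(S)\in\M$ and $span_\M(C_\M(S))=span_\M(S)$. (ii) Writing $S_{<v}:=\{u\in S:u<v\}$, an element $v\in S$ lies in $R_\M(S)$ iff $v\in span_\M(S_{<v})$, and $\{u\in C_\M(S):u<v\}$ is a basis of $S_{<v}$; consequently, taking $I=\{u\in C_\M(S):u<v\}$ as a witness, $C_\M(S)$ dominates every $v\in S$ by $\M$. (iii) $R_\M$ is monotone, i.e.\ $S\subseteq T\Rightarrow R_\M(S)\subseteq R_\M(T)$, which is immediate from (ii) since $span_\M(S_{<v})\subseteq span_\M(T_{<v})$. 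The point of scanning in increasing order is precisely that the rejection criterion in (ii) then speaks about \emph{smaller} elements, matching the ``$u<v$'' clause in the definition of domination.

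Next, with $(\M_1,<_1)$ and $(\M_2,<_2)$ on the common ground set $V$, write $C_i:=C_{\M_i}$, $R_i:=R_{\M_i}$ and define $g:2^V\to2^V$ by $g(X):=V\setminus R_2\bigl(V\setminus R_1(X)\bigr)$. Because $R_1$ and $R_2$ are monotone, the two complementations cancel and $g$ is monotone; on the finite complete lattice $2^V$, Knaster--Tarski (or simply iterating $g$ from $\emptyset$ until it stabilizes) yields a fixed point $X^*=g(X^*)$. Set $Y^*:=V\setminus R_1(X^*)$ and $K:=X^*\cap Y^*$. Unwinding $X^*=V\setminus R_2(Y^*)$ gives $V\setminus X^*=R_2(Y^*)$ and $V\setminus Y^*=R_1(X^*)$, whence $K=X^*\setminus R_1(X^*)=C_1(X^*)$ and also $K=Y^*\setminus R_2(Y^*)=C_2(Y^*)$; in particular $K\in\M_1\cap\M_2$ by (i).

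Finally I would verify that $K$ dominates every $v\in V$. If $v\in X^*$, then since $K=C_1(X^*)$ and, by (ii), $C_1(X^*)$ dominates every element of $X^*$ by $\M_1$, $K$ dominates $v$ by $\M_1$. If $v\notin X^*$, then $v\in V\setminus X^*=R_2(Y^*)\subseteq Y^*$, so (ii) applied to $\M_2$ and $S=Y^*$ gives $v\in span_{\M_2}(\{u\in C_2(Y^*):u<_2 v\})=span_{\M_2}(\{u\in K:u<_2 v\})$; taking $I=\{u\in K:u<_2 v\}$ (independent in $\M_2$, all elements $<_2 v$) as a witness, $K$ dominates $v$ by $\M_2$. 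Since $V=X^*\cup(V\setminus X^*)$, $K$ is a kernel.

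The main obstacle is conceptual rather than computational: one must hit on the correct monotone operator $g$ and the correct candidate $K=X^*\cap Y^*$, and must orient the greedy choice function so that fact (ii) lines up with the asymmetric ``$u<v$'' condition in the definition of domination. Once those choices are in place, facts (i)--(iii) are routine matroid exercises (greedy produces a basis; an element is dropped exactly when it falls in the span of what precedes it) and the rest of the proof is bookkeeping with set complements.
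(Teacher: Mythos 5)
The paper does not prove this statement; it is imported verbatim as a citation (Theorem~2 of Fleiner's paper), so there is no in-paper argument to compare against. Your proof is correct and is essentially Fleiner's own argument: the greedy choice function $C_\M$ with its rejection map $R_\M$, the monotonicity (substitutability) of $R_\M$, the monotone operator $g(X)=V\setminus R_2(V\setminus R_1(X))$ with a Knaster--Tarski fixed point on $2^V$, and the identification $K=C_1(X^*)=C_2(Y^*)$ as the kernel; the one point worth emphasizing, which you do handle, is that scanning in $<$-\emph{increasing} order is what makes the rejection witness $\{u\in C_\M(S):u<v\}$ consist of elements strictly below $v$, matching the ``$u<v$'' clause in the paper's definition of domination.
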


\begin{proof}[Proof of~\refT{thm:chiellchiGP}]
Let $\C=\M_1\cap \M_2$ and $V=V(\C)$.
    We shall prove that if the sizes of the list of permissible colors for each $v\in V$ are~$C:=\chi(\M_1\cap\M_2)$, then there exists an $\C$-respecting list coloring.

     Suppose~$\M_1$ has parts $\cS_1,\cdots, \cS_a$ with respective constraints $p_1,\dots,p_a$ and~$\M_2$ has parts $\cT_1,\dots,\cT_b$ with  constraints $q_1,\dots, q_b$.  
And~$N_1,\cdots, N_{C}\in\C$ are disjoint satisfying that~$\cup_{k=1}^CN_k=V$.

Now we label the elements in~$V$ distinctly by numbers $1,\cdots, |V|$ in the following way: 
for each $1\le k\le C$, the elements in $N_k$ are labeled by the numbers in 
\[\Big\{\sum_{t=1}^{k-1}|N_t|+1,\cdots,\sum_{t=1}^k|N_t|\Big\}\] 
distinctly in an arbitrary way.

We fix one of such labeling, and we define the two linear orders $<_1, <_2$ on~$V$ according to the labeling: for two distinct $u,v\in V$, $u<_1v$ if the number labeled to~$u$ is less than that of~$v$; $u<_2 v$ if the number labeled to~$u$ is greater than that of~$v$.

For ease of notation, for any~$v\in V$, we define~$i(v)$ as the index~$i$ such that $v\in \cS_i$,~$j(v)$ as the index~$j$ such that $v\in \cT_j$, and~$k(v)$ as the index~$k$ such that~$v\in N_k$.

For any induced subcomplex~$\D$ of~$\C$ and $v\in V(\D)$, we define
    \begin{equation*}
        \begin{split}
          \Gamma_{\D,1}(v)&:=\{z\in V(\D)\mid z\in \cS_{i(v)}, z<_1 v \},\\
          \Gamma_{\D,2}(v)&:=\{z\in V(\D)\mid z\in \cT_{j(v)}, z<_2 v\}. 
        \end{split}
    \end{equation*}
\begin{claim}\label{claim:kernel}
    Let $(\M_1,<_1)$, $(\M_2,<_2)$ and $N_1,\dots, N_C$  be defined as the above. Given $U \subseteq V$, let $\D:=\M_1[U]\cap\M_2[U]$. If for any $v\in U$, there exist integers $1\le t_\D(v)\le T_\D(v)$ such that
\begin{align}
|\Gamma_{\D,1}(v)|&\le (t_\D(v)-1)p_{i(v)}+p_{i(v)}-1, \label{eq:GC1}\\
|\Gamma_{\D,2}(v)|&\le (T_\D(v)-t_{\D}(v))q_{j(v)}+q_{j(v)}-1. \label{eq:GC2}
\end{align}
Then for any list~$L$ of permissible colors given to each element of~$U$ satisfying $|L_v|\ge T_{\D}(v)$ for each $v\in U$, there exists a $\D$-respecting list coloring.
\end{claim}
\begin{proof}[Proof of the claim]
    We prove by double induction on $T^*_{\D}=\max_{u\in U}T_{\D}(u)$ and on the number of elements $v\in U$ such that $T_{\D}(v)=T^*_{\D}$. 
    
    When $T^*_{\D}=1$, i.e., $T_\D(u)=t_{\D}(u)=1$ for every $u\in U$, we have $U\in \M_1\cap \M_2$: suppose not, then either there are $p_i+1$ many elements of~$U$ in some~$\cS_i$, in which case the maximum one in $<_1$ order, say $v$, has $|\Gamma_{\D,1}(v)|\ge p_{i(v)}$, contradicting to the assumption that $|\Gamma_{\D,1}(v)|\le p_{i(v)}-1$ in~\eqref{eq:GC1}; or there are $q_j+1$ many vertices of $U$ in some~$\cT_j$, in which case the maximum one in $<_2$ order, say $v$, has $|\Gamma_{\D,2}(v)|\ge q_{j(v)}$, a contradiction to~\eqref{eq:GC2}. Then $U \in \M_1[U]\cap\M_2[U]=\D$ immediately implies that for any list~$L$ of permissible colors satisfying $|L_v|=1$ for every $v\in U$, the only possible list coloring is~$\D$-respecting.

Next, we turn to $T^*_{\D}>1$. %suppose for every $v\in U$,~\eqref{eq:GC1}--\eqref{eq:GC2} hold.
We take an element $v\in U$ such that $T_{\D}(v)=T^*_{\D}$ and take a color $c\in L_v$. Let $F_c:=\{u\in U\mid c\in L_u\}$. Then by~\refT{thm:MSM}, there is a kernel $K\subseteq F_c$ for $(\M_1[F_c],<_1)$ and $(\M_2[F_c],<_2)$. We define the new list~$L'$ of permissible colors for each element of $U\setminus K$ as
\begin{equation}\label{eq:newLu}
    L'_u=\begin{cases}
    L_u\setminus\{c\} &\text{ if $u\in F_c\setminus K$,}\\
    L_u &\text{ if $u\in U\setminus F_c$.}
\end{cases}
\end{equation}

It is enough to verify that for every $u\in U\setminus K$, there exist integers \[1\le t_{\D[U\setminus K]}(u)\le  T_{\D[U\setminus K]}(u)\] 
such that 
\begin{align}
|\Gamma_{\D[U\setminus K],1}(u)|&\le (t_{\D[U\setminus K]}(u)-1)p_{i(u)}+p_{i(u)}-1, \label{eq:1p} \\
|\Gamma_{\D[U\setminus K],2}(u)|&\le (T_{\D[U\setminus K]}(u)-t_{\D}(u))q_{j(u)}+q_{j(u)}-1,\label{eq:2q}
\end{align}
and
\begin{equation}\label{eq:TDUK}
     T_{\D[U\setminus K]}(u) = 
\begin{cases}
    T_{\D}(u)-1  &\text{ if $u\in F_c\setminus K$,} \\
    T_\D(u)       &\text{ if $u\in U\setminus F_c$.}
\end{cases} 
\end{equation}
Then combining~\eqref{eq:newLu} with the induction hypothesis (since $T_{\D[U\setminus K]}^*\le T_{\D}^*$ and the number of elements $u\in U\setminus K$ such that~$T_{\D[U\setminus K]}(u)=T_{\D}^*$ decreases by at least one compared to those in~$U$), for~$L'$, there exists a $\D[U\setminus K]$-respecting list coloring for elements in $U\setminus K$, which together with coloring elements in~$K$ by color~$c$ forms a $\D$-respecting list coloring for elements in~$U$. We are done.

Note that $U\setminus K=(U\setminus F_c)\cup (F_c\setminus K)$.
For each $u\in U\setminus F_c$, since 
\[|\Gamma_{\D[U\setminus K],\ell}(u)|\le |\Gamma_{\D,\ell}(u)|\]
for $\ell=1,2$,
setting $t_{\D[U\setminus K]}(u) := t_{\D}(u)$ and $T_{\D[U\setminus K]}(u) = T_{\D}(u)$ satisfies the requirements~\eqref{eq:1p},~\eqref{eq:2q}, and~\eqref{eq:TDUK}.

For each~$u\in F_c\setminus K$, by the construction~$K$ dominates~$u$ by $\M_1[F_c]$ or by~$\M_2[F_c]$. If~$K$ dominates~$u$ by $\M_1[F_c]$, then there exists $I_1\subseteq K$ and $I_1\cup\{u\}\not\in  \M_1[F_c]$ and $z<_1 u$ for every $z\in I_1$. It means there exist~$p_{i(u)}$ elements of $I_1\subseteq K$ that are in the~$\cS_{i(u)}$, and each of them is less than $u$ in $<_1$.
Thus
\begin{equation*}
\begin{split}
       |\Gamma_{\D[U\setminus K],1}(u)|&\le |\Gamma_{\D,1}(u)|-p_{i(u)}\\
    &\le (t_\D(u)-1)p_{i(u)}+p_{i(u)}-1-p_{i(u)}\\
    &\le \big((t_\D(u)-1)-1\big)p_{i(u)}+p_{i(u)}-1.
\end{split}
\end{equation*}
Thus we can set $t_{\D[U\setminus K]}(u):=t_\D(u)-1$, which is at least 1, since the above argument guarantees that $|\Gamma_{\D,1}(u)|\ge p_{i(u)}$ and then $t_\D(u)>1$.
On the other hand, setting
$T_{\D[U\setminus K]}(u)=T_{\D}(u)-1$, which is at least $t_{\D[U\setminus K]}(u)$ since $T_\D(u)\ge t_\D(u)$, we have
\begin{equation*}
\begin{split}
       |\Gamma_{\D[U\setminus K],2}(u)|\le |\Gamma_{\D,2}(u)|&\le \big(T_{\D}(u)-t_{\D}(u)\big)q_{j(u)}+q_{j(u)}-1\\
    &=\big(T_{\D[U\setminus K]}(v)-t_{\D[U\setminus K]}(v)\big)q_{j(u)}+q_{j(u)}-1.
\end{split}
\end{equation*}

If $K$ dominates $u$ by $\M_2[F_c]$, then there exists $I_2\subseteq K$ and $I_2\cup\{u\}\not\in  \M_2[F_c]$ and $z<_2 u$ for every $z\in I_2$. It means there exists~$q_{j(u)}$ elements of $I_2\subseteq K$ that are in the~$\cT_{j(u)}$, and each of them is less than $u$ in $<_2$.
Thus
\begin{equation*}
\begin{split}
       |\Gamma_{\D[U\setminus K],2}(u)|&\le |\Gamma_{\D,2}(u)|-q_{j(u)}\\
    &\le \big(T_\D(u)-t_\D(u)\big)q_{j(u)}+q_{j(u)}-1-q_{j(u)}\\
    &\le \big((T_\D(u)-1)-t_\D(u)\big)q_{j(u)}+q_{j(u)}-1.
\end{split}
\end{equation*}
Thus we set $T_{\D[U\setminus K]}(u)=T_\D(u)-1$ and $t_{\D[U\setminus K]}(u)=t_\D(u)$, and the above argument guarantees that $|\Gamma_{\D,2}(u)|\ge q_{j(u)}$ and then $T_\D(u)-t_\C(u)\ge 1$ so that $1\le t_{\D[U\setminus K]}(u)\le T_{\D[U\setminus K]}(u)$.
On the other hand,
\begin{equation*}
\begin{split}
       |\Gamma_{\D[U\setminus K],1}(u)|\le |\Gamma_{\D,1}(u)|
       &\le t_{\D}(u) p_{i(u)}+p_{i(u)}-1\\
       &=t_{\D[U\setminus K]}(u) p_{i(u)}+p_{i(u)}-1.
\end{split}
\end{equation*}
In both cases $1\le t_{\D[U\setminus K]}(u)\le T_{\D[U\setminus K]}(u)$ satisfy~\eqref{eq:1p},~\eqref{eq:2q}, and~\eqref{eq:TDUK}, which completes the proof of the claim.
\end{proof}

To complete the proof of the theorem, it is enough to verify for $U=V$ and for every $v\in V$, there exist integers $1\le t_\C(v)\le T_\C(v)\le C$ that satisfy~\eqref{eq:GC1}--\eqref{eq:GC2}.

For $v\in V$, by the setting of $<_1$, the elements of~$\Gamma_{\C,1}(v)$ have labels less than that of~$v$, which means they are in $\big(\cup_{\ell=1}^{k(v)-1}N_\ell\cap \cS_{i(v)}\big)\cup\big(N_{k(v)}\cap \cS_{i(v)}\setminus \{v\}\big)$.
Since $|N_\ell\cap \cS_i|\le p_i$ for every~$\ell$, we have
\[|\Gamma_{\C,1}(v)|\le \sum_{\ell=1}^{k(v)-1}|N_\ell\cap \cS_{i(v)}|+|N_{k(v)}\cap \cS_{i(v)}\setminus\{v\}|\le \big(k(v)-1\big)p_{i(v)}+p_{i(v)}-1. \]
And by our definition of $<_2$, elements in~$\Gamma_{\C,2}(v)$ have labels greater than that of~$v$, therefore
\[|\Gamma_{\C,2}(v)|\le \sum_{\ell=k(v)+1}^{C}|N_\ell\cap \cT_{j(v)}|+|N_{k(v)}\cap \cT_{j(v)}\setminus\{v\}|\le \big(C-k(v)\big)q_{j(v)}+q_{j(v)}-1. \]
Setting $t_\C(v)=k(v)$ and $T_\C(v)=C$, we verify the assumption~\eqref{eq:GC1} and~\eqref{eq:GC2} in~\refCl{claim:kernel} and thus prove the theorem.
\end{proof}

\bigskip{\noindent\bf Acknowledgements.} 
The author would like to thank Ron Aharoni for his helpful writing suggestions.

\small
\bibliographystyle{abbrv}
%\bibliography{ref}

\begin{thebibliography}{10}

\bibitem{AB}
R.~Aharoni and E.~Berger.
\newblock The intersection of a matroid and a simplicial complex.
\newblock {\em Trans. Amer. Math. Soc.} {\bf 358} (2006), 4895--4917.

\bibitem{ABGK}
R.~Aharoni, E.~Berger, H.~Guo, and D.~Kotlar.
\newblock Intersection of matroids.
\newblock {\em In preparation}.

\bibitem{ABR}
R.~Aharoni, E.~Berger, and R.~Ziv.
\newblock The edge covering number of the intersection of two matroids.
\newblock {\em Discrete Math.} {\bf 312} (2012), 81--85.


\bibitem{Berczi}
K.~B\'{e}rczi, T.~Schwarcz, and Y.~Yamaguchi.
\newblock List coloring of two matroids through reduction to partition matroids.
\newblock {\em SIAM J. Discrete Math.} {\bf 35} (2021), 2192--2209.

\bibitem{Eli}
E.~Berger and H.~Guo.
\newblock Coloring the intersection of two matroids.
\newblock {\em In preparation}.

\bibitem{edmonds1965minimum}
J.~Edmonds.
\newblock Minimum partition of a matroid into independent subsets.
\newblock {\em J. Res. Nat. Bur. Standards Sect. B} {\bf 69} (1965), 67--72.

\bibitem{erdos1979choosability}
P.~Erdos, A.~L. Rubin, and H.~Taylor.
\newblock Choosability in graphs.
\newblock {\em Congr. Numer.} {\bf 26} (1979), 125--157.

\bibitem{Fleiner}
T.~Fleiner.
\newblock A matroid generalization of the stable matching polytope.
\newblock In K.~Aardal and B.~Gerards, editors, {\em Integer Programming and
  Combinatorial Optimization}, Springer, Berlin, Heidelberg (2001), 105--114.

\bibitem{GALVIN1995153}
F.~Galvin.
\newblock The list chromatic index of a bipartite multigraph.
\newblock {\em J. Combin. Theory Ser. B} {\bf 63} (1995), 153--158.

\bibitem{kiralyk}
T.~Kir{\'a}ly.
\newblock Egres open: Research forum of the egerv\'ary research group.
\newblock 2013.

\bibitem{kiraly2013open}
T.~Kir{\'a}ly.
\newblock Open questions on matroids and list colouring.
\newblock In {\em Midsummer Combinatorial Workshop} (2013), 36--38.

\bibitem{kiraly2010list}
T.~Kir{\'a}ly and J.~Pap.
\newblock On the list colouring of two matroids.
\newblock {\em EGRES Quick Proof}, (2010-01), 2010.

\bibitem{williams}
C.~S.~A. Nash-Williams.
\newblock Decomposition of finite graphs into forests.
\newblock {\em J. London Math. Soc.}  {\bf s1-39} (1964), 12--12.

\bibitem{SchrijverCO}
A.~Schrijver.
\newblock {\em Combinatorial optimization: polyhedra and efficiency},
  {\bf 24}.
\newblock Springer (2003).

\bibitem{SEYMOUR1998150}
P.~Seymour.
\newblock A note on list arboricity.
\newblock {\em J. Combin. Theory Ser. B} {\bf 72} (1998), 150--151.

\end{thebibliography}

\normalsize
\end{document}